\def \ms {\medskip}
\def \nd {\noindent}
\def \absolute {|}
\def \triangleq {:=}
\def \cf {{\cal F}}
\def \tt {{t\wedge \tau_R}}
\def \t {\tau}
\def \mn{\medskip\\ \noindent}
\def \rw {\rightarrow}
\def\@maketitle{\newpage
    \null
    \vskip .8truein
    \begin{center}%
     {\bf \@title \par}%
     \vskip 1.5em
     {\small
      \lineskip .5em
      \begin{tabular}[t]{c}\@author
      \end{tabular}\par}%
    \end{center}%
    \par
    \vskip .4truein}
\let\O=\Omega
\let\s=\sigma
\let\nn=\nonumber
\let\qq=\qquad
\let\ol=\overline
\newcommand{\re}{{{I\!\!R}}}
\newcommand{\na}{{{I\!\!N}}}
\def\R{{\bf R}}
\def\F{{\bf F}}
\let\a=\alpha
\def \no {\noindent}
\def \xt {X^{t,x}}
\def\qed{\hspace*{\fill} $\Box$\par\medskip}
\def \esp {[0,T]\times \R^N}
\def \be {\begin{equation}}
\def \ee {\end{equation}}
\def \ba{\begin{array}}
\def \ea{\end{array}}
\newtheorem{theorem}{Theorem}[section]
\newtheorem{definition}{Definition}[section]
\newtheorem{corollary}{Corollary}[section]
\newtheorem{remark}{Remark}[section]
\DeclareMathOperator{\argmax}{argmax}
\DeclareMathOperator{\Heav}{Heav}
\def \ed{\end{document}}
\def\proof{\list{}{\setlength{\leftmargin}{0pt}
                      \parskip=0pt\parsep=0pt\listparindent=2em
                      \itemindent=0pt}\item[]\futurelet\testchar\@maybe}
\def\@maybe{\ifx[\testchar \let\next\@Opt
          \else \let\next\@NoOpt \fi \next}
\def\@Opt[#1]{{\it Proof of #1.\ }}\def\@NoOpt{{\it Proof.\ }}
\begin{document}
\title{\Large \bf Regularity of Nash payoffs of Markovian nonzero-sum stochastic differential games.}
\author{{\Large Said Hamadene}\thanks{LMM, Le Mans Universit\'e, Avenue Olivier Messiaen, 72085 Le Mans, Cedex 9, France. \texttt{e-mail: hamadene@univ-lemans.fr}} {\Large \,\,\,and} {\Large Paola Mannucci}\thanks{Dipartimento di Matematica "Tullio Levi Civita",
Universit\`a degli Studi di Padova,
Via Trieste, 63, 35121, Padova, Italy. \texttt{e-mail: mannucci@math.unipd.it
}}}

\date{\today}
\maketitle
\vspace{0.5cm}

\noindent {\textbf{Abstract.}} In this paper we deal with the problem of existence of a smooth solution of the Hamilton-Jacobi-Bellman-Isaacs (HJBI for short) system of equations associated with nonzero-sum stochastic differential games. We consider the problem in unbounded domains either in the case of continuous generators or for discontinuous ones. In each case we show the existence of a smooth solution of the system. As a consequence, we show that the game has smooth Nash payoffs which are given by means of the solution of the HJBI system and the stochastic process which governs the dynamic of the controlled system.
\medskip

\noindent {\textbf{Key words}}: Nash equilibrium point; Nonzero-sum stochastic
differential game; Nash payoff; Backward SDE; HJBI system of
equations; Sobolev space.
\medskip

\noindent{\textbf{MSC2010 subsject classification}}: 91A23 ; 49N10 ; 35R05 ; 49K20.

\section{Introduction}\label{intro}

\index{introduction}This article deals with a nonzero-sum stochastic
differential game (NZSDG for short) which we
describe hereafter. Let us consider a system, on which intervene two
players $\pi_1$ and $\pi_2$, whose dynamics is given by a solution
of a stochastic differential equation of the following form: \be
\label{eq:state}
dx_t^{u_1,u_2}=f(t,x_t^{u_1,u_2},u_{1t},u_{2t})dt+\sigma(t,x_t^{u_1,u_2})dB_t,\,t\leq
T \mbox{ and } x^{u_1,u_2}_0=x\in \re^N\ee where:

(i) $B:=(B_t)_{t\leq T}$ is a Brownian motion ;

(ii) $u_1:=(u_{1t})_{t\leq T}$ (resp. $u_2:=(u_{2t})_{t\leq T}$) is a
stochastic process with values in $U_1$ (resp. $U_2$) a compact metric
space and adapted w.r.t $(\cf_t)_{t\leq T}$, the completed natural
filtration of $B$. The process $u_1$ (resp. $u_2$) is the way by which the first
(resp. second) player $\pi_1$ (resp. $\pi_2$) acts on the system ;

(iii) $f(\cdot)$ and $\sigma(\cdot)$ are given functions.
\medskip

\noindent The system that one implies could be an asset in the
financial market, an economic unit, a factor in the economic or financial spheres, etc.  On the other hand, one can consider the differential game with more
than two players and this does not rise a major issue, the treatment
is the same.
\medskip

The conditional payoff of player $\pi_1$ (resp. $\pi_2$) from $t$ to $T$, when she implements $u_1$ (resp. $u_2$), is denoted $J_t^1(u_1,u_2)$ (resp. $J_t^2(u_1,u_2)$) and given by: for $i=1,2$, 
$$\ba{l}
J_t^i(u_1,u_2)=E[\int_t^Th_i(s,x^{u_1,u_2}_s,u_{1s},u_{2s})ds+g_i(x^{u_1,u_2}_T)|\cf_t].\ea
$$
The functions $h_1$, $h_2$ (resp. $g_1$, $g_2$) stand for the intantaneous (resp. terminal) payoffs of the players $\pi_1$, $\pi_2$, respectively. Then the problem of interest is to find a Nash equilibrium point for the game,
i.e., a pair of controls of the players $(u_1^*,u_2^*)$ such that
$$
J_0^1(u_1^*,u_2^*)\geq J_0^1(u_1,u_2^*) \mbox{ and }J_0^2(u_1^*,u_2^*)\geq
J_0^2(u_1^*,u_2^*) \mbox{ for any }u_1,u_2.
$$ On the other hand it is important to highlight the regularity properties of the conditional payoffs $J_t^i(u_1^*,u_2^*),\,\, t\leq T$, $i=1,2$, called conditional Nash payoffs of the game. The meaning of $(u_1^*,u_2^*)$ is that none of the players gains if she/he decides to deviate unilaterally.

In bounded domains this topic is already considered, e.g. in the monograph by Bensoussan-Frehse \cite{BeFr2}.
So far there are many papers which deal with nonzero-sum stochastic
differential games in a framework similar to ours, among which one can quote \cite{BeFr0, BeFr1, BeFr2, BoGh, Frie1, hlp, HaMu, hamu1, hamnote, PM, PM2}. They can be divided into three categories. In the first category one can group the works where the non-zerosum game is tackled by using probabilitic tools, namely backward stochastic differential equations (BSDE for short) \cite{hlp, HaMu, hamu1}; to solve the problem it is enough to solve its associated BSDE which is multi-dimensional with non-Lipschitz coefficient. However this solvability is not obvious and it is achieved only in the Markovian framework. The latter papers are related to various features of the data of the game, e.g., they are bounded  in \cite{hlp}  while this boudedness is partially removed in \cite{hamu1} and finally, in \cite{HaMu}, the authors consider the case when the coefficients of the multi-dimensional BSDE associated with the game  are discontinuous and the Nash point is of bang-bang type. In the second category one can gather the papers which use PDEs to tackle this non-zerosum differential game problem \cite{BeFr0, BeFr1, BeFr2, Frie1, PM, PM2}. Mainly  in those works, 
%under varying assumptions of which bounded domains (?), 
firstly the authors provide a regular solution for the Hamilton-Jacobi-Bellman system of equations associated with the game and then construct a Nash equilibrium point.   More precisely, in the papers \cite{BeFr0, BeFr1, BeFr2, Frie1}
the existence of Nash equilibria are proved under the assumption that the feedback is continuous, while in 
\cite{PM, PM2} the study is done loosing continuity of the feedback and hence of the Hamiltonians. Finally in the third category one can range papers, rather rare, where a mix of both of the previous methods are used, e.g., in \cite{BoGh, hamnote}. Note that in \cite{BoGh}, the controls are of relaxed type. 

The general case of path dependent process $(x_t^{u_1,u_2})_{t\leq
T}$ solution of \eqref{eq:state} is still open since, as pointed out previously, to
tackle this type of nonzero-sum SDG leads to deal with a multidimensional BSDE with
non Lipschitz coefficients and non markovian randomness,
for which there is a lack of result (see for instance \cite{FrDos}). 
 
The probabilistic approach can be described as: Let $H_i$, $i=1,2$, be the Hamiltonians associated with this game problem, i.e., for $i=1,2$ and $(t,x,u_1,u_2,p_1,p_2)\in [0,T]\times \re^N\times U_1\times U_2\times \re^{N+N}$,
\begin{equation}\label{Hamdef}
H_i(t,x,p_i,u_1,u_2):=p_i^\top f(t,x,u_1,u_2)+h_i(t,x,u_1,u_2)
\end{equation}
and assume that the following generalized Isaacs condition (GIC for short) is satisfied: 
\mn
\noindent (\textbf{A0}): There exist measurable functions $\bar u_1(t,x,p_1,p_2)$ and $\bar u_2(t,x,p_1,p_2)$ valued respectively in $U_1$ and $U_2$ such that for any $(t,x,p_1,p_2,u_1,u_2)$,
\begin{equation}\label{gicintro}\begin{array}{l}
H_1(t,x,p_1,\bar u_1(t,x,p_1,p_2),\bar u_2(t,x,p_1,p_2))\geq H_1(t,x,p_1,u_1,\bar u_2(t,x,p_1,p_2)) \\\mbox{ and } \\
H_2(t,x,p_2,\bar u_1(t,x,p_1,p_2),\bar u_2(t,x,p_1,p_2))\geq H_2(t,x,p_2,\bar u_1(t,x,p_1,p_2),u_2).\end{array}\end{equation}
This condition is the analogous of the Isaacs one in the framework of zero-sum differential games.

Next assume there exist adapted stochastic processes $(Y^1,Y^2,Z^1,Z^2)$, solution of the following system of two coupled BSDEs: for $i=1,2$ 
\begin{equation}\label{bsdeg}\left \{\begin{array}{l}
Y^i_t=g_i(x_T)+\int_t^TH_i(s,x_s,\sigma^{-1}(s,x_s)^\top Z^i_s,(\bar u_1,\bar u_2)(s,x_s,\sigma^{-1}(s,x_s)^\top Z^1_s,\sigma^{-1}(s,x_s)^\top Z^2_s))ds\\\qquad \qquad \qquad \qquad \qquad \qquad-\int_t^TZ^i_sdB_s,\,\,t\leq T,\end{array}\right. \end{equation}
where $(x_t)_{t\leq T}$ is the solution of \eqref{eq:state} without drift term (see \eqref{original equation of x} below) then \\$(u_1^*,u_2^*):=(\bar u_1(t,x_t,Z^{1,\sigma}_t, Z^{2,\sigma}_t),\bar u_2(t,x_t,Z^{1,\sigma}_t, Z^{2,\sigma}_t))_{t\leq T}$ (with $Z^{i,\sigma}_t=\sigma^{-1}(t,x_t)^\top Z^i_t$, $i=1,2$) is a Nash equilibrium point for the nonzero-sum differential game and $Y^i_t=J^i_t(u_1^*,u_2^*)$, $i=1,2$. Thus the problem turns into looking for a solution of the two-dimesional BSDE (\ref{bsdeg}) which is associated with the game problem. This point of view has been considered among others in \cite{hlp, hamu1}, where the existence of a Nash point for the game is shown under appropriate assumptions on the data of the problem. It must be said that the link between the processes $Y^i$ and $Z^i$ which allows for the construction of the Nash equilibrium point of the game is not very well understood. Mainly because there is a need of further regularity properties of the processes $Y^i$, $i=1,2$, which are not established yet.
\ms

As written before, the second approach uses partial differential equations tools (see e.g. \cite{BeFr1, Frie1, Frie,PM, PM2} and the references therein)  and mainly it turns into seeking a regular solution of the Hamilton-Jacobi-Bellman-Isaacs equations associated with this game problem, under various assumptions on the regularity of the feedbacks, which is the following: for $i=1,2$,
\begin{equation}\label{hjbedp}\!\left\{\begin{array}{l}
-\partial_t V^i(t,x)-\frac{1}{2}Tr[\sigma \sigma^\top(t,x)D^2_{xx}V^i(t,x)]
\\\\ \qquad 
-\nabla_xV^i(t,x).f(t,x,(\bar u_1,\bar u_2)(t,x,\nabla_xV^1(t,x),\nabla_xV^2(t,x)))\\\\\qquad-h_i(t,x,(\bar u_1,\bar u_2)(t,x,\nabla_xV^1(t,x),\nabla_xV^2(t,x)))=0, (t,x)\in R_T:=(0,T)\times \re^N\,;\\\\
V^i(T,x)=g^i(x), \mbox{ for }x\in \re^N.\end{array}\right.
\end{equation}
This system is the verification theorem of the NZSD game problem. Indeed if a regular solution of (\ref{hjbedp}) exists then by the use of It\^o-Krylov formula to $V^i(t,x_t)$ one obtains that the pair of controls $(u_1^*,u_2^*):=((\bar u_1,\bar u_2)(t,x_t, \nabla_xV^1(t,x_t),\nabla_xV^2(t,x_t)))_{t\leq T}$ is a Nash equilibrium point for the game and additionally $J_t^i(u_1^*,u_2^*)=V^i(t,x_t)$, $i=1,2$. This approach provides also more regular properties of the Nash payoffs of the game which we cannot obtain from the probabilistic one. This is helpful at least in: i) the understanding of the link between $J_t^i(u_1^*,u_2^*)$, $i=1,2$, and $(u_1^*$, $u_2^*)$ ; ii) the simulation process of either the Nash payoffs or the Nash equilibrium points where usually smoothness properties of the data are required. However, to the best of our knowlegde, system of equations (\ref{hjbedp}), with lack of regularity on $\overline u_i$, is studied only in the case when the domain $R_T$ is bounded. Therefore the main objective of this work is to deal with the same problem when $R_T$ is unbounded. 
Note that, for unbounded domains, a verification result for Markovian feedback controls can be obtained (see e.g. Theorem 8.5 of \cite{Do}).\\
This paper is organized as follows: In Section 2, we introduce precisely the nonzero-sum differential game which we will study later on. Section 3 is devoted to the study of the HJBI system (\ref{hjbedp}) associated with the NZSDG. We consider three different cases. In the first one we show that the system has a solution when the data of the problem are mainly continuous and  bounded. Then we treat the case when the data are bounded discontinuous and
finally we deal with the case when the Hamiltonian are discontinuous and the data have linear or polynomial growth. In Section 4, we study the connection of the solutions of the system with the NZSDG problem. We provide the Nash equilibrium point for the game and some regularity properties of its conditional Nash payoffs.

\section{Statement of the NZSDG problem}

Let $(\Omega, \F , P)$ be a probability space which carries a $N$-dimensional Brownian motion  $(B_t)_{t\leq T}$ whose completed natural filtration is $(\mathcal{F}_t= \sigma\{W_s, s\leq T\})_{t\leq T}$ and $\mathcal{P}$ is the $\sigma$-algebra on $[0,T]\times \Omega$ of
$\cf_t$-progressively measurable processes.
\medskip

\noindent Let $\sigma$ be a Borel  measurable function from $[0, T] \times \re^N$ into $\re^{N\times
N}$ which satisfies the following assumptions:
\begin{flushleft}
    (\textbf{A1}):
\end{flushleft}
\begin{description}
  \item[(i)]  $\sigma\in C^2((0,T)\times\re^N)$ and is uniformly Lipschitz in $x$ i.e. there exists a constant C such that:
  $\forall t \in [0, T]$, $x, x^{\prime} \in \re^N$,
$$\absolute{ \sigma(t,x)-\sigma(t, x^{\prime})}| \leq C \absolute{x-
x^{\prime}|}.$$
  \item[(ii)] $\sigma$ is bounded, invertible and its inverse is bounded.
  \end{description}
Note that condition (ii) is equivalent to the existence of a constant $\alpha>0$ such that for any $(t,x)$,
 \begin{equation}\label{uniellip}{\alpha} ^{-1}I \leq \sigma(t,x)\sigma^{\top}(t,x)
 \leq \alpha I\end{equation}
   i.e., $\sigma\sigma^\top$ is uniformly elliptic ($\sigma^{\top}$ is the transpose of $\sigma$).

\nd Next let $(X_t)_{t\leq T}$ be the process solution of the following stochastic
differential equation
\begin{equation}\begin{array}{c}
X_t= x+ \int_0^t \sigma(s, X_s)dB_s, \, t\leq T \,  \text{and} \
x\in \re^N.\label{original equation of x}\end{array}
\end{equation}
Since $\sigma$ verifies (A1), the process  $(X_t)_{t\leq T}$ exists and is unique (see e.g. \cite{karatzasshreve,ry} for more details).
Next let us denote by $U_1$ and $U_2$ two compact metric spaces,
meanwhile, $M_1$ and $M_2$ are the sets of the $\mathcal{P}$-measurable
processes with values in $U_1$ and $U_2$ subsets of $\re^{k_i}$, $k_i\in {\na\setminus\{0\}}$, $i=1,2$, respectively. The set $M= M_1 \times M_2$ is
called of admissible controls for players $\pi_1$ and $\pi_2$.
\mn
Now let $f$ (resp. $h_i$, $i= 1, 2$) be borelian functions from
$[0,T]\times \re^N \times U_1 \times U_2$ into $\re^N$ (resp. $\re$) and $g^i$ another borelian function from $\re^N$ to $\re$ such that for some constants $C\geq 0$ and $\gamma\geq 0$ it holds: for $i=1,2$,
\mn
${\bf (A2):}$
\begin{description}
  \item[(i)] $\absolute{f(t,x,u_1,u_2)}|\leq C(1+\absolute{x}|),$ for any $(t,x,u_1,u_2)\in [0,T]\times \re^N\times U_1\times U_2$;
   \item[(ii)] $\absolute{g^i(x)}|+\absolute{h_i(t,x,u_1,u_2)}|\leq C(1+\absolute{x}|^{\gamma})$, $\gamma \geq 1$, for any $x\in\re^N$.
\end{description}

For $(u_1,u_2)\in M$, let $P^{(u_1,u_2)}$be the probability on $(\Omega,
\mathcal{F})$ defined as follows:
\begin{equation}
dP^{(u_1,u_2)}=
\zeta(\int_0^{.}\sigma^{-1}(s,X_s)f(s,X_s,u_{1s},u_{2s})dB_s).dP\label{new
probability puv}
\end{equation}
where for any $(\mathcal{F}_t, P)$-continuous local martingale $M=
(M_t)_{t\leq T}$, the density function $\zeta(M)$ is defined by:
\begin{equation}
\zeta(M)=(\zeta(M)_t)_{t\leq T}\triangleq (\mbox{exp}\{M_t-\frac{1}{2}\langle M\rangle_t\})_{t\leq
T}\label{density function}
\end{equation}
with $(\langle M\rangle_t)_{t\leq T}$  is the increasing adapted process associated with $M$, i.e. $(M^2_t-\langle M\rangle_t)_{t\leq T}$ is a local martingale. 

Under assumptions (A1) and (A2), the non-negative measure $P^{(u_1,u_2)}$ is a probability which is equivalent to
$P$ (\cite{karatzasshreve}, p. 200) and by the Girsanov Theorem
\cite{girsanov1960transforming} the process $B^{(u_1,u_2)}= (B_t-\int_0^t
\sigma^{-1}(s,X_s)f(s, X_s, u_{1s}, u_{2s})ds)_{t\leq T}$ is an
$(\mathcal{F}_t, P^{(u_1,u_2)})$-Brownian motion and $X$ is a weak
solution of the following stochastic differential equation
\begin{equation}
dX_t=f(t, X_t, u_{1t}, u_{2t})dt+ \sigma(t, X_t)dB_t^{(u_1,u_2)},\ t\leq
T \ \mbox{and} \ X_0= x.\label{new equation of x}
\end{equation}
For i= 1,2, we define the conditional payoffs of players $\pi_1$ and $\pi_2$ respectively by\begin{equation}\begin{array}{c}
J_t^i(u_1,u_2)= E^{(u_1,u_2)}[\int_t^T h_i(s, X_s, u_{1s}, u_{2s})ds+
g^i(X_T)|\cf_t]\label{cost funciton}\end{array}
\end{equation}
where $E^{(u_1, u_2)}$is the expectation under the probability
$P^{(u_1,u_2)}$. Note that when $t=0$, $J_0^i(u_1,u_2)$ is nothing but $E^{(u_1,u_2)}[\int_0^T h_i(s, X_s, u_{1s}, u_{2s})ds+
g^i(X_T)]$ since $\cf_0$ contains only $P$-null sets and $P^{u_1,u_2}$ is equivalent to $P$.
\ms

\noindent The problem is to find a Nash equilibrium point for the game, i.e. an admissible control $(u^*, v^*)$ such that for any $(u_1, u_2)\in M$
$$J_0^1(u_1^*, u_2^*)\geq J_0^1(u_1, u_2^*) \mbox{ and } J_0^2(u_1^*, u_2^*)\geq J_0^2(u_1^*, u_2)$$ and, as much as possible, to highlight the properties of  
%The pair of controls $(u^*, v^*)$ is called . The pair of quantities $(J_0^{1}(u^*, v^*),J_0^{2}(u^*, v^*))$ is called the value of the Nash equilibrium point $(u^*,v^*)$.
$(J_t^1(u_1^*, u_2^*), J_t^2(u_1^*, u_2^*))_{t\leq T}$. \qed 
\section{The PDE study of the HJBI associated with the NZSDG}
Firstly, recall once for all that we assume that the GIC introduced in Assumption (A0) is fulfilled. We will consider
the HJBI system associated with the NZSDG under different assumptions on the data.
We illustrate three cases to show the different techniques of the proof when we have lack either of continuity or boundedness.
In this way we obtain a generalization of results obtained in bounded domains
(\cite{BeFr0, PM, PM2}).
In particular we will obtain the existence of a solution of the parabolic system (suitably defined in dependence on the assumptions) in three cases:\\\\
\noindent \underline{\bf Case 1}: The data $f$, $h_i$, $g_i$ are globally bounded  and continuous with respect to all their entries and $\ol u^i(t,x,p_1,p_2)$, $i=1,2$, (see \eqref{gicintro} for the definition) are continuous.
\\

\no Example: Let us assume that $N=1$, $U_1=[0,1]$, $U_2=[-1,1]$, $f(t,x,u_1,u_2)=f_1(t,x)-u_1-u_2$, $h_1(t,x,u_1,u_2)=\bar h_1(t,x)-u_1^2$ and finally $h_2(t,x,u_1,u_2)=\bar h_2(t,x)-2u_2^2$. Then the Generalized Isaacs condition is satisfied with
$\ol u_1(t,x,p_1)=( (-\frac{p_1}{2})\wedge 1))\vee 0$ and $\ol u_2(t,x,p_2)= ((-\frac{p_2}{4})\wedge 1)\vee (-1)$, and obviously $\ol u_i$, $i=1,2$, are continuous.
\\

\noindent \underline{\bf Case 2}:
The data $f$, $h_i$, $g_i$ are globally bounded and continuous with respect to all their entries, the drift $f$ has a separate structure,  and
the feedbacks $\overline u_i(t,x,p_1,p_2), i=1,2,$ are not continuous with respect to $(p_1,p_2)$.\\

\no Example: Let us take $N=1$, $U_1=[0,1]$, $U_2=[-1,1]$, $f(t,x,u_1,u_2)=f_1(t,x)+u_1+u_2$, $h_1(t,x,u_1,u_2)=h_2(t,x,u_1,u_2)=0$, with $f_i(t,x)$ bounded and continuous. Then the Generalized Isaacs condition is satisfied with
$\ol u_1(t,x,p_1)=1_{\{p_1\geq 0\}}$ and $ \ol u_2(t,x,p_2)= 1_{\{p_2\geq 0\}}-1_{\{p_2<0\}}$ and obviously $\ol u_i$, $i=1,2$, are discontinuous.
\ms

\noindent \underline{\bf Case 3}:
The data $f$, $h_i$, $g_i$ are continuous with respect to all their entries but have a linear
growth w.r.t. $x$, the drift $f$ has a separate structure and
the feedbacks $\overline u_i(t,x,p_1,p_2)$ are not continuous with respect to $(p_1,p_2)$.
\ms

\no Note that the example of Case 2 fits also for Case 3 if we choose $f_1(t,x)$ and $g_i(x)$, $i=1,2$, continuous and with a linear growth w.r.t. $x$.

\subsection{The HJBI system associated with the NZSDG}
We denote by $R_T:=(0,T)\times \re^N$ the layer in $\re^{N+1}$.
Let us consider the following system of PDEs which stands, after inverting time, for the HJBI system of the nonzero-sum differential game introduced previously:
\begin{align}
&\frac{\partial V_{i}(t,x)}{\partial t}-
\sum _{h,k=1}^{N}a_{hk}(t,x)\frac{\partial ^2V_{i}(t,x)}{\partial x_{h}\partial x_{k}}=
H_{i}(t,x,\nabla_{x}V_{i}(t,x), \ol u_{i}(t,x), \ol
u_{j}(t,x)), \ i,j=1,2,\, i\neq j,\ \text{in } R_T,\label{sistemagenerale1}\\
&V_{i}(0,x)=g_{i}(x), \, i=1,2\quad  x\in\re^N;\label{ricontorno}\\
&\ol u_{1}(t,x)\in \argmax_{\{u_1\in U_{1}\}}H_{1}(t,x,\nabla_{x}V_{1}(t,x),
u_1, \ol
u_{2}(t,x)) \label{sistemagenerale2};\\
&\ol u_{2}(t,x)\in \argmax_{\{u_2\in U_{2}\}}H_{2}(t,x,\nabla_{x}V_{2}(t,x),\ol
u_{1}(t,x),u_2),\label{sistemagenerale3}
\end{align}
where
$a=\frac{1}{2}\sigma \sigma^{\top}$
is the matrix with entries $a_{hk}$, $h, k=1,\dots N$ and the Hamiltonian $H_i$ are
defined in \eqref{Hamdef}.\\\\
Recall that, from assumption $\textbf{(A1)}$, the matrix $a(t,x) \in
C^{2}(R_{T})$, is bounded and uniformly elliptic, in the sense that for
all $(t,x)\in R_{T}$ and for all $\xi\in {\re}^{N}$,
\begin{align}
 \alpha^{-1} |\xi| ^{2}\leq \sum_{h,k=1}^{N}a_{hk}(t,x)\xi_{h}\xi_{k}\leq
\alpha |\xi| ^{2}\,\,(\a>0).\label{ipotesia2}
\end{align}
Let $\Omega\subset \re^N$ be a bounded open domain and let us define
 $\Omega_{T}:\equiv (0,T)\times \Omega$ and $\partial_{p}\Omega_{T}:\equiv \big((0,T)\times \partial\Omega\big)\cup
\big(\{t=0\}\times\Omega\big)$. We denote by $H^{1+\alpha}(\ol\Omega_{T})$, $\alpha\in (0,1)$, the set of functions $v(t,x)$ such that
$v$ is a $\a$-H\"{o}lder continuous function in $\overline\Omega_T$ together with its spatial derivatives $\frac{\partial v}{\partial x_i}$, $i=1,...,N$. The norm in $H^{1+\alpha}$ is denoted by $|v|^{(1+\alpha)}$.
We denote by
$W_q^{1,2}(\Omega_{T})$, $q>1$, the set of functions $v(t,x)$ such that
$v$ and its weak derivatives  $\frac{\partial v}{\partial t}$,
$\frac{\partial v}{\partial x_i}$, $\frac{\partial^2 v}{\partial x_i\partial x_j}$ belong to $L^q(\Omega_{T})$.
The norm in $W_q^{1,2}(\Omega_T)$ is denoted by $\|v\|_q^{(2)}$.\\
\subsection{Bounded continuous data and feedbacks}\label{Contf}
Let us study now HJBI system
\eqref{sistemagenerale1}-\eqref{sistemagenerale3} under the assumptions of Case 1, i.e.,  the functions $f$, $h_i$, $g_i$, $i=1,2$, are globally bounded  and continuous with respect to all their entries. Precisely we assume that:
%Moreover we assume that the following assumption holds.
%\mn
%\noindent\textbf{(A4)}: For any fixed $(t,x)$, the mapping $(p_1,p_2)\in [0,T]\times \R^{m+m}\mapsto $\\$(H_1(t, x, p, \bar u_1(t,x,p,q), \bar u_2(t,x,p,q)),H_2(t, x, q, \bar u_1(t,x,p,q), \bar u_2(t,x,p,q))\in \R^2$ is continuous. \qed
%\ms

%\noindent Finally let us list some more assumptions, which we will require later, on the data $f$, $l_i$, $g_i$ $(i=1,2)$ of the game:
\ms
\noindent {\bf Assumption (H1)}:
\begin{align}
&\mbox{(i) }\text{ The functions } f(t, x, u_{1}, u_{2}) \text{ and } h_i(t, x, u_{1}, u_{2}), i=1,2, \text{ are globally bounded in } R_T\nonumber\\
&  \text{ and continuous in }R_T\times U_1\times U_2 ;\label{fi}\\
&\mbox{(ii) } \text{ For }i=1,2,\,\,g_{i}(x) \in H^{1+\alpha}(Q),
\alpha\in (0,1),
\text{ for any bounded } Q\subset \re^N,\nn\\
&\text{ and it is bounded and continuous in }\re^N; \label{ipotesig}\\
&\mbox{(iii) } \text{ For } i=1,2,\,\,\text{the mapping}\nonumber\\
&(p_1,p_2)\in \re^{N+N}\mapsto H_i(t, x, p_i, \bar u_1(t,x,p_1,p_2), \bar u_2(t,x,p_1,p_2))\in \re \text{ is continuous.}\label{giccont}
\end{align}
System (\ref{sistemagenerale1})-(\ref{ricontorno}) is a
Cauchy problem for a quasilinear  uniformly parabolic system in the layer
$R_T$ with equations strongly coupled by the functions\\
$H_{i}(t, x,\nabla_{x}V_i(t,x), (\ol u_1,\ol u_2)(t,x,\nabla_{x}V_{1}(t,x), \nabla_{x}V_{2}(t,x)))$, $i=1,2$.

\begin{definition}\label{def}
$(V_1,V_2)$ is a {\em strong} solution
of the system (\ref{sistemagenerale1})-(\ref{sistemagenerale3}),  if
\begin{align}
&a)\ V_{1}(t,x),\ V_{2}(t,x) \in L_{\infty}(R_T),\\
&b)\ V_{1}(t,x),\ V_{2}(t,x) \in H^{1+\alpha}(\ol \Omega_{T})\cap
W^{1,2}_{q}(\Omega_{T}), \\& \qq \qq \qq \text{ where for any bounded subdomain } \Omega \subset \re^N,\ \Omega_{T}=(0,T)\times\Omega, \ \alpha\in (0,1),\ q>N+2;\nn\\
&c)\text{ Equations}\
(\ref{sistemagenerale1}),(\ref{sistemagenerale2}),(\ref{sistemagenerale2})
\text{ hold almost everywhere in } \Omega_T \text{ and }\
(\ref{ricontorno}) \text{ holds in } \Omega.\nn
\end{align}
\end{definition}

\begin{theorem}\label{existence}
Under assumptions \eqref{ipotesia2} and (H1), there exists a strong solution
$(V_1,V_2)$  of
the parabolic system (\ref{sistemagenerale1})-(\ref{sistemagenerale3})
in the layer $R_T$.

\end{theorem}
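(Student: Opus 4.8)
The plan is to exhaust the layer $R_T$ by bounded cylinders $\Omega_T^R:=(0,T)\times B_R$, solve an auxiliary Dirichlet problem for the coupled system on each $\Omega_T^R$, derive a priori estimates that are \emph{local in space and uniform in} $R$, and finally let $R\to\infty$ and extract a limit solving the system on the whole layer. The favourable structural feature I would exploit from the outset is that, by \eqref{Hamdef}, the right-hand side $H_i(t,x,p_i,\ol u_1,\ol u_2)=p_i^\top f(t,x,\ol u_1,\ol u_2)+h_i(t,x,\ol u_1,\ol u_2)$ is \emph{affine} in $p_i=\nabla_x V_i$, with coefficient $f$ and forcing $h_i$ globally bounded by (H1)(i); hence no genuinely quadratic gradient nonlinearity is present, and the only true nonlinearity and coupling enter through the \emph{merely continuous} (not Lipschitz) dependence of $(\ol u_1,\ol u_2)$ on $(\nabla_x V_1,\nabla_x V_2)$ recorded in (H1)(iii). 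This lack of Lipschitz regularity is exactly what dictates a Schauder (compactness) fixed-point argument rather than a contraction.

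On a fixed cylinder I would first produce a solution of \eqref{sistemagenerale1}--\eqref{sistemagenerale3} with the time-independent lateral datum $V_i=g_i$ on $(0,T)\times\partial B_R$ (compatible with the initial condition and bounded by $\nor{g_i}_\infty$). Define a map $\Phi$ on a ball of $\big(H^{1+\alpha}(\ol\Omega_T^R)\big)^2$: given $(W_1,W_2)$, freeze the feedbacks $\ol u_k=\ol u_k(t,x,\nabla_x W_1,\nabla_x W_2)$, $k=1,2$, and solve the two \emph{decoupled linear} parabolic equations
\begin{equation*}
\partial_t V_i-\sum_{h,k=1}^N a_{hk}\,\partial^2_{x_h x_k}V_i-\nabla_x V_i\cdot f(t,x,\ol u_1,\ol u_2)=h_i(t,x,\ol u_1,\ol u_2),\qquad i=1,2,
\end{equation*}
each a linear uniformly parabolic equation with $C^2$ principal part by \eqref{ipotesia2}, bounded first-order coefficient and bounded right-hand side, hence uniquely solvable in $W^{1,2}_q(\Omega_T^R)$ for every $q$ by the classical $L^p$ parabolic theory. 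Setting $\Phi(W_1,W_2):=(V_1,V_2)$, the $W^{1,2}_q$ estimate with $q>N+2$ together with the embedding $W^{1,2}_q\hookrightarrow H^{1+\alpha}$ makes $\Phi$ map a suitable ball into a relatively compact subset of $\big(H^{1+\alpha}\big)^2$, while the continuity of $(p_1,p_2)\mapsto \ol u_k$ and of $f,h_i$ yields continuity of $\Phi$. Schauder's fixed-point theorem then delivers a strong solution $(V_1^R,V_2^R)$ on $\Omega_T^R$.

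The crucial step is the passage to the unbounded domain, which rests on a priori bounds that do not deteriorate with $R$. The sup bound $\nor{V_i^R}_{L^\infty(\Omega_T^R)}\le \nor{g_i}_\infty+T\nor{h_i}_\infty$ follows from the parabolic comparison principle applied to $V_i^R\mp(\nor{g_i}_\infty+t\nor{h_i}_\infty)$, using that the first-order term $\nabla_x V_i\cdot f$ does not affect the comparison and that $f,h_i,g_i$ are bounded uniformly in $R$; this is precisely where global boundedness in (H1)(i)--(ii) is used. With this bound in hand I would invoke interior and up-to-$\{t=0\}$ parabolic $L^p$ estimates on fixed spatial sub-cylinders $B_\rho(x_0)\times(0,T)\subset\subset \Omega_T^R$: localizing with a cut-off, the $W^{1,2}_q\big(B_{\rho}(x_0)\times(0,T)\big)$ norm of $V_i^R$ is controlled by $\nor{V_i^R}_{L^\infty(B_{2\rho}(x_0)\times(0,T))}$, the $L^\infty$ norms of $h_i,f$, and the $H^{1+\alpha}$ norm of $g_i$ on $B_{2\rho}(x_0)$, all bounded independently of $R$ (for $R$ large the lateral boundary is avoided), and Sobolev embedding then gives local $H^{1+\alpha}$ bounds uniform in $R$. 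Finally, taking $R=n\to\infty$, Arzel\`a--Ascoli and a diagonal extraction produce a subsequence with $(V_i^n,\nabla_x V_i^n)\to(V_i,\nabla_x V_i)$ locally uniformly on $R_T$, while the uniform local $W^{1,2}_q$ bounds give weak convergence of $\partial_t V_i^n$ and $D^2_{xx}V_i^n$ along the same subsequence. Since the gradients converge locally uniformly and $(\ol u_1,\ol u_2)$ is continuous, $H_i(\cdot,\nabla_x V_i^n,\ol u_1,\ol u_2)\to H_i(\cdot,\nabla_x V_i,\ol u_1,\ol u_2)$ locally uniformly, so one passes to the limit in the equations and recovers \eqref{sistemagenerale1}--\eqref{sistemagenerale3} almost everywhere, with $(V_1,V_2)$ bounded, locally in $H^{1+\alpha}\cap W^{1,2}_q$, and satisfying the initial condition, i.e.\ a strong solution in the sense of Definition \ref{def}. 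I expect the main obstacle to be exactly the $R$-uniformity of the gradient (H\"older) estimates together with the verification of compactness and continuity of $\Phi$ under the merely continuous feedbacks $\ol u_i$, since the absence of Lipschitz control forbids uniqueness-based contraction arguments and forces all the quantitative control to come from the linear parabolic regularity theory applied to an equation whose right-hand side is only known to be continuous in the frozen gradients.
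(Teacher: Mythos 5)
Your proposal is correct and follows the paper's outer architecture exactly: exhaustion of $R_T$ by cylinders $B_{R,T}=(0,T)\times B(0,R)$ with lateral and initial datum $g_i$, an $L^\infty$ bound uniform in $R$ via the maximum principle (the paper cites Theorem 2.1, p.~13 of Ladyzhenskaya--Solonnikov--Ural'ceva), local $W^{1,2}_q$ estimates on fixed $B_{R_0,T}$ independent of $R>R_0$, Sobolev embedding into $H^{1+\alpha}$, and a diagonal extraction as $R\to\infty$ with the continuity assumption \eqref{giccont} used to pass to the limit in the Hamiltonians. Where you genuinely diverge is the existence mechanism on each bounded cylinder: you run a Schauder fixed-point argument for the map $\Phi$ obtained by freezing the feedbacks at $(\nabla_x W_1,\nabla_x W_2)$, whereas the paper runs a successive-approximation scheme $(V^R_{1n},V^R_{2n})$ with feedbacks frozen at the gradients of iterate $n-1$, proves estimates \eqref{Vnunif}--\eqref{Vnhol} uniform in $n$, extracts via Ascoli--Arzel\`a, and passes to the limit through the key decomposition $H_{1}(t,x,\nabla_{x}V^R_{1n},\ol u(\nabla_x V^R_{\cdot(n-1)}))=(\nabla_{x}V^R_{1n}-\nabla_{x}V^R_{1(n-1)})\cdot f(\cdot)+H_{1}(t,x,\nabla_{x}V^R_{1(n-1)},\ol u(\nabla_x V^R_{\cdot(n-1)}))$, in which the first term dies by boundedness of $f$ and the second converges by \eqref{giccont} alone. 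The trade-off is real: the paper's decomposition only ever invokes continuity of the \emph{composed} map $(p_1,p_2)\mapsto H_i(t,x,p_i,\ol u_1,\ol u_2)$, i.e.\ exactly hypothesis (H1)(iii), while your continuity proof for $\Phi$ needs the frozen coefficient maps $(p_1,p_2)\mapsto f(t,x,\ol u_1(t,x,p_1,p_2),\ol u_2(t,x,p_1,p_2))$ and $(p_1,p_2)\mapsto h_i(t,x,\ol u_1,\ol u_2)$ to be continuous \emph{separately} --- in effect continuity of the feedbacks $\ol u_i$ themselves, which is part of the standing Case~1 assumptions but is strictly stronger than \eqref{giccont} (and is precisely what is abandoned in the paper's Cases~2 and~3). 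Conversely, your fixed-point route buys robustness at the paper's weakest joint: in the iteration, the claim that $\nabla_{x}V^R_{1n}-\nabla_{x}V^R_{1(n-1)}\to 0$ along an Ascoli--Arzel\`a subsequence is not automatic, since consecutive indices need not both lie in the extracted subsequence and could a priori converge to different limits, so extra care (a further extraction identifying the two limits) is needed there; at a Schauder fixed point the equation holds exactly and no such consecutive-iterate comparison arises. Your remaining technical points (compactness of $W^{1,2}_q\hookrightarrow H^{1+\alpha}$ for $q>N+2$, continuity of the linear solution operator via $L^q$ estimates and uniqueness, interior estimates avoiding the lateral boundary for $R$ large) are all sound and match the quantitative inputs the paper draws from the same linear parabolic theory.
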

\begin{proof} To prove the existence of a strong solution in any $\Omega_T=(0,T)\times \Omega$, where
$\Omega\subset \re^N$ is any bounded domain, let us consider the
following problem in a sequence of expanding domains of the form
$B_{R,T}:=(0,T)\times B(0,R)$ where $B(0,R):=\{|x|<R\}$ (clearly, if
$R\rightarrow +\infty$, $B_{R,T}\rightarrow R_T$):

\begin{align}
&\frac{\partial V_{1}^R(t,x)}{\partial t}-
\sum _{h,k=1}^{N}a_{hk}(t,x)\frac{\partial ^2V_{1}^R(t,x)}{\partial x_{h}\partial x_{k}}\nn\\
&\qquad\qquad = H_{1}(t, x,\nabla_{x}V_{1}^R(t,x), \ol u_{1}(t,x,\nabla_{x}V_{1}^R, \nabla_{x}V_{2}^R), \ol
u_{2}(t, x,\nabla_{x}V_{1}^R, \nabla_{x}V_{2}^R)), \,\mbox{in}\ B_{R,T}\label{sistemageneraleD};\\
&\frac{\partial V_{2}^R(t,x)}{\partial t}-
\sum _{h,k=1}^{N}a_{hk}(t,x)\frac{\partial ^2V_{2}^R(t,x)}{\partial x_{h}\partial x_{k}}
\nn\\
&\qquad \qquad =H_{2}(t,x,\nabla_{x}V_{2}^R(t,x), \ol u_{1}(t,x, \nabla_{x}V_{1}^R, \nabla_{x}V_{2}^R), \ol
u_{2}(t,x, \nabla_{x}V_{1}^R, \nabla_{x}V_{2}^R)), \, \mbox{in}\ B_{R,T};\\
&V_{i}^R(t,x)=g_{i}(x), \quad i=1,2,\quad
\mbox{in}\ \partial_pB_{R,T}:=\{(t,x)\in R_T, |x|=R\}\cup \{(t,x)\in R_T, t=0\}.\label{contornoD}
\end{align}

Remark that this auxiliary problem is compatible with our initial nonzero-sum game problem.
In fact if a solution $(V_1^R(t,x),V_2^R(t,x))$ exists and both functions belong to $W^{1,2}_p(B_{R,T})$ then, by  setting $(\ol V_1^R(t,x),\ol V_2^R(t,x))=(V_1^R(T-t,x),V_2^R(T-t,x))$ and using the It\^o-Krylov formula (see e.g. \cite{krylov}, Theorem 2.10.1) we have the following characterization: for $i=1,2$,
\begin{align}\label{eq:vlocal}
&\ol V_i^R(0,x)=J^R_{i}(\ol u_{1}, \ol u_{2}):=
E^{\bar u_1,\bar u_2}\bigg\{\int^{\tau_R}_{0} h_{i}(s,X_s,(\ol u_{1},\ol u_{2})(s))ds
+g_{i}(X_{\tau_R})\bigg\}
\end{align}
where:

(i) $(X_s)_{s\leq T}$ is the stochastic process defined in (\ref{original equation of x}) ;

(ii) $\tau_R\equiv T \wedge \inf\{s\geq t, X(s)\notin B_R \}$ ;

(iii) $(\ol u_{1},\ol u_{2}):=((\ol u_{1},\ol u_{2})(s))_{s\leq T}=((\ol u_{1},\ol u_{2})(s,X_s,
\nabla_{x}\ol V_{1}^R(s,X_s),\nabla_{x}\ol V_{2}^R(s,X_s)))_{s\leq T}$.
\mn
Moreover the pair $(\ol u_1,\ol u_2)$ is a Nash equilibrium point for the nonzero-sum differential game defined with the same data $f$, $h_i$, $g_i$, $U_i$, i=1,2, etc. but which terminates at the random time $\tau_R$ (see Theorem \ref{nashpoint} for more details). Finally note that if $x\in\partial B(0,R)$  then $\tau_R=0$ and $V_i^R(0,x)=g_i(x)$.

To prove the existence of a solution $(V_1^R, V_2^R)$ of
problem (\ref{sistemageneraleD})-(\ref{contornoD})
we use a standard bootstrap argument and we find uniform estimates which will allow us to prove the convergence to the solution we are looking for.

Let $\{V_{1n}^R(t,x),V_{2n}^R(t,x)\}$, $n\geq 1$, be the solution of the following system:
\begin{align}
&\frac{\partial V_{1n}^R}{\partial t}-
\sum _{h,k=1}^{N}a_{hk}(t,x)\frac{\partial ^2 V_{1n}^R}{\partial x_{h}\partial x_{k}}\nn\\& \qquad\qquad
=H_{1}(t,x,\nabla_{x}V_{1n}^R(t,x),
(\ol u_{1},\ol
u_{2})(t,x, \nabla_{x}V_{1(n-1)}^R(t,x), \nabla_{x}V_{2(n-1)}^R(t,x))
)\mbox{ in }\ B_{R,T};\label{sistemageneraleDn}\\
&\frac{\partial V_{2n}^R}{\partial t}-
\sum _{h,k=1}^{N}a_{hk}(t,x)\frac{\partial ^2 V_{2n}^R}{\partial x_{h}\partial x_{k}}\nn
\\& \qquad\qquad
=H_{2}(t,x,\nabla_{x}V_{2n}^R(t,x),
(\ol u_{1},\ol
u_{2})(t,x, \nabla_{x}V_{1(n-1)}^R(t,x), \nabla_{x}V_{2(n-1)}^R(t,x))
) \mbox{ in } B_{R,T} ;\\
&V_{in}^R(t,x)=g_{i}(x),\ i=1,2, \mbox{ on }\ \partial_{p}B_{R,T}.\label{contornon}
\end{align}
%\begin{align}
%&\frac{\partial V_{1n}}{\partial t}-
%\sum _{h,k=1}^{N}a_{hk}(t,x)\frac{\partial ^2 V_{1n}}{\partial x_{h}\partial x_{k}}=\nn\\& \qquad\qquad
%\nabla_{x}V_{1n}(t,x)\sigma^{-1}(t,x)
%f (\ol u_{1},\ol
%u_{2})(t,x, \nabla_{x}V_{1(n-1)}(t,x), \nabla_{x}V_{2(n-1)}(t,x)) \nn\\
%&+l_1(\ol u_{1},\ol
%u_{2})(t,x, \nabla_{x}V_{1(n-1)}(t,x), \nabla_{x}V_{2(n-1)}(t,x)), \,in\ B_{R,T}\label{sistemageneraleDn}\\
%&\frac{\partial V_{2n}}{\partial t}-
%\sum _{h,k=1}^{N}a_{hk}(t,x)\frac{\partial ^2 V_{2n}}{\partial x_{h}\partial x_{k}}=\nn
%\\& \qquad\qquad
%\nabla_{x}V_{2n}(t,x)\sigma^{-1}(t,x)
%f (\ol u_{1},\ol
%u_{2})(t,x, \nabla_{x}V_{1(n-1)}(t,x), \nabla_{x}V_{2(n-1)}(t,x)) \nn\\
%&+l_2(\ol u_{1},\ol
%u_{2})(t,x, \nabla_{x}V_{1(n-1)}(t,x), \nabla_{x}V_{2(n-1)}(t,x)), \,in\ B_{R,T},\\
%&V_{in}(t,x)=g_{i}(x),\ i=1,2,\quad  on\ \partial_{p}B_{R,T}.\label{contornon}
%\end{align}

Note that this is a linear system of parabolic equations, but those latter are decoupled. Next as $f$, $h_i$ and $g_i$, $i=1,2$, are bounded functions in $R_T$, then from Theorem 9.1 p.341 of \cite{LSU} and Lemma 3.3, Chapter 2 p.80 \cite{LSU}
there exists an unique solution of problem (\ref{sistemageneraleDn})-(\ref{contornon}), $V_{1n}^R, V_{2n}^R\in W^{1,2}(B_{R,T})$
such that
\begin{equation}\label{Vnunif}
\|V_{in}^R\|^{(2)}_{q,B_{R,T}}\leq  C\bigg(\|f\|_{q,B_{R,T}},\|h_i\|_{q,B_{R,T}}, \|g_i\|^{2-1/q}_{q,\partial_pB_{R,T}}\bigg),\ i=1,2,
\end{equation}
where $C$ is a constant which does not depend on  $n$ and $R$.
By means of the Sobolev embedding theorem we also have
\begin{equation}\label{Vnhol}
\|V_{in}^R\|^{(1+\alpha)}_{B_{R,T}}\leq  C,\ \mbox{
$\alpha=1-\frac{N+2}{q}$},\ i=1,2,
\end{equation}
where $C$ is a constant which does not depend on $n$ and on $R$.

By \eqref{Vnhol} and Ascoli-Arzel\`a Theorem, we can extract two subsequences, which we denote again by
$V_{1n}^R$, $V_{2n}^R$ such that
\begin{eqnarray}\label{convergenzeC0aff}
&&V_{in}^R\rightarrow V_{i}^R,\
\displaystyle\frac{\partial V_{in}^R}{\partial x_{h}}\rightarrow
\displaystyle \frac{\partial V_{i}^R}{\partial x_{h}},\ \mbox{in}\ C^{0}(B_{R,T}),\ i=1,2,\
h=1,\ldots N,
\end{eqnarray}
and, from \eqref{Vnunif} and the weak precompactness of the unit ball of $W^{2,1}_q$, we have also
\begin{eqnarray}\label{convergenzedeboliaff}
\!\!\!\!&&\displaystyle\frac{\partial V_{in}^R}{\partial t}\rightharpoonup
\displaystyle\frac{\partial V_{i}^R}{\partial t},\
\displaystyle\frac{\partial ^2V_{in}^R}{\partial x_{h}\partial x_{k}}\rightharpoonup
\displaystyle\frac{\partial ^2V_{i}^R}{\partial x_{h}\partial x_{k}},
\ \mbox{weakly in } L_{2}(B_{R,T}),\ i=1,2,\ h,k=1,\ldots N .
\end{eqnarray}
From (\ref{convergenzeC0aff}), (\ref{convergenzedeboliaff}),
$
V_{1}^R,\ V_{2}^R\in H^{1+\alpha}(B_{R,T})\cap
W^{1,2}_{q}(B_{R,T})$, with
$\alpha=1-\frac{N+2}{q}$.
\ms

Next the following decomposition holds true:
$$\begin{array}{l}
H_{1}(t,x,\nabla_{x}V_{1n}^R(t,x),
(\ol u_{1},\ol
u_{2})(t,x, \nabla_{x}V_{1(n-1)}^R(t,x), \nabla_{x}V_{2(n-1)}^R(t,x))
)\\\\=(\nabla_{x}V_{1n}^R(t,x)-\nabla_{x}V_{1(n-1)}^R(t,x))f(t,x,
(\ol u_{1},\ol
u_{2})(t,x, \nabla_{x}V_{1(n-1)}^R(t,x), \nabla_{x}V_{2(n-1)}^R(t,x))
)+ \\\\\qquad\qquad
H_{1}(t,x,\nabla_{x}V_{1(n-1)}^R(t,x),(\ol u_{1},\ol
u_{2})(t,x, \nabla_{x}V_{1(n-1)}^R(t,x), \nabla_{x}V_{2(n-1)}^R(t,x)
)).
\end{array}$$
The first term, as $n\rw \infty$, converges to $0$ since $f$ is bounded and \\$(\nabla_{x}V_{1n}^R(t,x)-\nabla_{x}V_{1,n-1}^R(t,x)) \rw _n0$ while the second one converges to
$$
H_{1}(t,x,\nabla_{x}V_{1}^R(t,x),(\ol u_{1},\ol
u_{2})(t,x, \nabla_{x}V_{1}^R(t,x), \nabla_{x}V_{2}^R(t,x)
))
$$
by the continuity of assumption (A3)-(ii). We can do the same for the quantity
$$H_{2}(t,x,\nabla_{x}V_{2n}^R(t,x),
(\ol u_{1},\ol
u_{2})(t,x, \nabla_{x}V_{1(n-1)}^R(t,x), \nabla_{x}V_{2(n-1)}^R(t,x))
)
$$
which converges, as $n\rw \infty$, to
$$
H_{2}(t,x,\nabla_{x}V_{2}^R(t,x),(\ol u_{1},\ol
u_{2})(t,x, \nabla_{x}V_{1}^R(t,x), \nabla_{x}V_{2}^R(t,x)
)).
$$
Going back now to (\ref{sistemageneraleDn})-(\ref{contornon}), take the limit w.r.t $n$ to obtain that $\{V_{1}^R,V_{2}^R\}$ solve equations
(\ref{sistemageneraleD})-(\ref{contornoD}) almost everywhere in $B_{R,T}$ and
$V_{i}^R=g_{i}$, $i=1,2$, on $\partial_{p}B_{R,T}$.

Moreover from the boundedness of the data of the problem, applying the maximum principle (\cite{LSU}, Theorem 2.1, p.13) in
$B_{R,T}$
we obtain that the solution $V_i^R$ of (\ref{sistemageneraleD})-(\ref{contornoD}) is such that
\begin{equation}\label{Linfinito}
\|V_1^R, V_2^R\|_{\infty}\leq C,
\end{equation}
where $C$ does not depend on $R$ hence they are uniformly bounded.
From the previous estimate \eqref{Linfinito} we can say that for any
$R_0>0$ and $V_{i}^R$ with $R>R_0$ we have
\begin{equation}\label{VR0}
\|V_{i}^R\|^{(2)}_{q,B_{R_0,T}}\leq  C(R_0),\ i=1,2.
\end{equation}
where the constant $C(R_0)$ depends on $R_0$ but not on $R$.

Now by employing the usual diagonal process we can extract from the sequence $\{V_i^R\}$ a subsequence which we call again $\{V_i^R\}$ that converges together with the first derivatives
 $\nabla_xV_{i}^{R}$ at each point of $R_T$ to some functions
 $V_i$, and  such that $\nabla_tV_{i}^{R}$, $D^2_{xx}V_{i}^{R}$  converge weakly in $L_2(\Omega_T)$ to  $\nabla_tV_{i}$, $D^2_{xx}V_{i}$ respectively for any $\Omega_T\subset R_T$ with $\Omega$ bounded subset of $\re^N$. Now from (\ref{Linfinito}) $V_i(t,x)$ are bounded in $R_T$.
 Hence
 \begin{equation}\label{V21}
\|V_{i}\|^{(2)}_{q, \Omega_T}\leq  C(\ol{\Omega}_T),\ i=1,2,
\end{equation}
 for any $\Omega_T\subset R_T$.
 Moreover $V_i(t,x)$, $i=1,2$, solve problem (\ref{sistemagenerale1})-(\ref{ricontorno})
 in any $\Omega_T\subset R_T$ with $\Omega$ bounded subset of $\re^N$, i.e.
 is a strong solution of the problem (see also Section 8, p.492-493 of \cite{LSU}).
\end{proof}

%%%%%%%%%%%%%%%%%%%%%%%%%%%%
\subsection{Bounded data and discontinuous Hamiltonian.}\label{discontf}
In this subsection we consider the case where the generalized Isaacs
condition (A0) is satisfied with discontinous functions $\ol
u^i(t,x,p_1,p_2)$, $i=1,2$, w.r.t $(p_1,p_2)$. To better understand
the problem we start with
 an example where the feedback can be written in an explicit way.
This problem was considered in \cite{PM} for bounded domains and in
\cite{HaMu} in $\re^N$.\\ 
For the sake of brevity here in the following HJBI equation and also in the next section we denote by $V_y$ the derivative $\frac{\partial V}{\partial y}$ with respect to a generic variable $y$.\\
We take an affine structure of $f$ and
$h_i$, i.e.,
\begin{align}
&f(t, x,u_{1}, u_{2})= f_{1}(t, x) u_{1}+f_{2}(t, x) u_{2}, \text{ where for }i=1,2,\nn\\
&\qq f_{i}:\ (0,T)\times\re^N \rightarrow \re,\ f_{i}\in
C^{1}([0,T]\times \re^N)\ \text { and bounded } ;\label{faffine}\\\nn\\
&h_i:\ (0,T)\times\re^N\times U_{1}\times
U_{2}\rightarrow  {\re},\,h_{i}(t, x,u_{1}, u_{2})= h_{i}(t, x) u_{i} \mbox{ with }\nn\\
&\qq h_{i}:\ (0,T)\times\re^N\rightarrow \re,\ h_{i}\in
C^{1}([0,T]\times \re^N)\ \text { and bounded }, \,i=1,2.\label{payoffaffine}
\end{align}
From (\ref{faffine})-(\ref{payoffaffine}) we have:
\begin{align}\label{prehamlineare}
&H_{1}(t, x,p,u_{1}, u_{2})= (p \cdot f_{1}(t, x) + h_{1}(t, x)) u_1+
p \cdot f_{2}(t, x)u_{2},\\
&H_{2}(t, x,p,u_{1},u_{2})= (p \cdot f_{2}(t, x) + h_{2}(t, x)) u_2+
p \cdot f_{1}(t, x)u_{1}.\nn
\end{align}
We take
as control sets
\begin{equation}\label{set}
 U_1=U_2=[0,1].
\end{equation}
In this case it is possible to
find an explicit expression to
 $\argmax_{\{u_{i}\in U_{i}\}}H_{i}(t, x,p,u_{i})=\Heav(p\cdot f_i(t, x)+h_i(t, x))$, $i=1,2$.
Here $\Heav(\eta)$ is the set valued Heaviside
graph, $\Heav(\eta)=1$, if $\eta>0$, $\Heav(\eta)=0$ if $\eta<0$, $\Heav(0)=[0,1]$, i.e. $\Heav(\eta)$
is a multivalued function from $\re$ to $\re$ that associates to each point $\eta\in\re$ a set $\Heav(\eta)\subseteq \re$.
In this case we can explicitely see  that the optimal feedbacks
$\ol u_{i}(t,x,p)\in \argmax_{\{u_{i}\in U_{i}\}}H_{i}(t, x,p,u_{i})=\Heav(p\cdot f_i(t,x)+h_i(t, x))$
are not continuous with respect to the variable $p$ and
the generalised Isaacs condition (A0) is satisfied with discontinuous functions $\ol u^i$, $i=1,2$, w.r.t $(p_1,p_2)$.\\
Hence the terms on the right hand sides
contain multivalued functions and the system
(\ref{sistemagenerale1})-(\ref{ricontorno}) becomes
\begin{align}
&V_{1t}-\sum_{h,k}a_{hk}V_{1x_{h} x_{k}}
= (\nabla_xV_{1}\cdot
f_{1}+
h_{1}) \ol u_1+\nabla_{x}V_{1}\cdot f_{2}\, \ol u_2,\label{HH}\\
&V_{2t}-\sum_{h,k}a_{hk}V_{2x_{h} x_{k}}
=(\nabla_xV_{2}\cdot 
f_{2}+
h_{2}) \ol u_2+\nabla_{x}V_{2}\cdot f_{1}\, \ol u_1,\nn\\
&\ol u_{1}(t,x, \nabla_{x}V_{1})\in \mathrm{Heav}((\nabla_{x} V_{1}\cdot f_{1}
+h_{1})(t,x)),\nn\\
&\ol u_{2}(t,x, \nabla_{x}V_{2})\in \mathrm{Heav}((\nabla_{x} V_{2}\cdot f_{2}
+h_{2})(t,x)),\nn\\
&V_i(0,x)=g_i(x)\nn.
\end{align}
that can be written also as: $\forall (t,x)\in R_T$, 
\begin{align}
&V_{1t}-a_{hk}V_{1x_{h} x_{k}} \in (V_{1x}\cdot f_{1}+
h_{1}) \mathrm{Heav}(V_{1x}\cdot f_{1}
+h_{1})+V_{1x}\cdot f_{2}\, \mathrm{Heav}(V_{2x}\cdot
f_{2}
+h_{2})\,;\nn\\
&V_{2t}-a_{hk}V_{2x_{h} x_{k}} \in (V_{2x}\cdot f_{2}+
h_{2}) \mathrm{Heav}(V_{2x}\cdot f_{2}
+h_{2})+V_{2x}\cdot f_{1}\, \mathrm{Heav}(V_{1x}\cdot
f_{1}
+h_{1})\,;\nn\\
&V_i(0,x)=g_i(x),\ x\in\,\re^N.
\end{align}
In the following theorem we strongly use the explicit expression of the Hamiltonian
to get the existence result.
\begin{theorem}\label{existence3}
Under assumptions \eqref{ipotesia2},
(\ref{faffine})-(\ref{payoffaffine}), (\ref{set}), (\ref{ipotesig})
there exists a strong solution $(V_1,V_2)$  of the parabolic
system (\ref{HH}) in the layer $R_T$.
\end{theorem}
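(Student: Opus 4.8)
The plan is to regularise the multivalued Heaviside graph by continuous, nondecreasing approximations, to solve the resulting system by means of the already treated Case~1, and then to let the regularisation parameter tend to zero. The observation that organises the whole argument is the elementary identity $s\,\Heav(s)=s^{+}$: in the diagonal terms $(\nabla_xV_i\cdot f_i+h_i)\,\ol u_i$ the multivalued part disappears and only the Lipschitz map $s\mapsto s^{+}$ survives, so that the genuinely discontinuous contributions are confined to the cross terms $\nabla_xV_1\cdot f_2\,\ol u_2$ and $\nabla_xV_2\cdot f_1\,\ol u_1$. For $\e\in(0,1)$ I would take $\Heav_\e\colon\re\to[0,1]$ to be the continuous ramp equal to $0$ on $(-\infty,0]$, to $\eta/\e$ on $(0,\e)$ and to $1$ on $[\e,+\infty)$, so that $\Heav_\e(\eta)\to\Heav(\eta)$ for every $\eta\neq0$ and $\eta\,\Heav_\e(\eta)\to\eta^{+}$ uniformly on compact sets. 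Replacing $\Heav$ by $\Heav_\e$ in \eqref{HH} produces a system whose feedbacks $\ol u_i^{\,\e}(t,x,p_i)=\Heav_\e(p_i\cdot f_i(t,x)+h_i(t,x))$ are continuous in $(p_1,p_2)$; since $f_i,h_i$ are bounded and $C^{1}$ by \eqref{faffine}--\eqref{payoffaffine}, $g_i$ satisfies \eqref{ipotesig} and $\Heav_\e$ takes values in $[0,1]$, the regularised problem falls within the scope of Case~1. Theorem~\ref{existence}, whose proof only uses boundedness and continuity of the data and of the composed Hamiltonian, then furnishes a strong solution $(V_1^\e,V_2^\e)$ in $R_T$.

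The key structural point is that every estimate in the proof of Theorem~\ref{existence} is uniform in $\e$, because the regularised data are bounded independently of $\e$. Hence the maximum principle yields $\|V_i^\e\|_\infty\le C$ as in \eqref{Linfinito}, and the interior parabolic $W^{1,2}_q$ estimates together with the Sobolev embedding give, on every $B_{R_0,T}$, a bound $\|V_i^\e\|^{(2)}_{q,B_{R_0,T}}+\|V_i^\e\|^{(1+\alpha)}_{B_{R_0,T}}\le C(R_0)$ with $C(R_0)$ independent of $\e$ and $\alpha=1-\frac{N+2}{q}$. By the diagonal procedure and Ascoli--Arzel\`a I would extract a subsequence along which $V_i^\e\to V_i$ and $\nabla_xV_i^\e\to\nabla_xV_i$ pointwise in $R_T$ and uniformly on compacts, while $\partial_tV_i^\e$ and $D^2_{xx}V_i^\e$ converge weakly in $L_q(\Omega_T)$ to $\partial_tV_i$, $D^2_{xx}V_i$ on every bounded $\Omega_T$; consequently $V_i\in H^{1+\alpha}(\ol\Omega_T)\cap W^{1,2}_q(\Omega_T)\cap L_\infty(R_T)$, which is the regularity required by Definition~\ref{def}.

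The main obstacle is the passage to the limit in the right-hand sides. Writing $s_i^\e:=\nabla_xV_i^\e\cdot f_i+h_i$ and $s_i:=\nabla_xV_i\cdot f_i+h_i$, the uniform convergence $s_i^\e\to s_i$ gives $s_i^\e\,\Heav_\e(s_i^\e)\to s_i^{+}$ for the diagonal terms. For the cross terms, $\Heav_\e(s_j^\e)$ is bounded in $L^\infty$, so along a further subsequence $\Heav_\e(s_j^\e)\rightharpoonup\ol u_j$ weakly-$*$ in $L^\infty$ with $0\le\ol u_j\le1$. Using once more the uniform convergence $s_j^\e\to s_j$, on the set $\{s_j>0\}$ one has $\Heav_\e(s_j^\e)\to1$ and on $\{s_j<0\}$ one has $\Heav_\e(s_j^\e)\to0$ pointwise; hence by dominated convergence $\ol u_j=1$ a.e. on $\{s_j>0\}$, $\ol u_j=0$ a.e. on $\{s_j<0\}$ and $\ol u_j\in[0,1]$ on $\{s_j=0\}$, that is, $\ol u_j(t,x)\in\Heav(\nabla_xV_j\cdot f_j+h_j)$ a.e. Since $\nabla_xV_i^\e\cdot f_j\to\nabla_xV_i\cdot f_j$ strongly on compacts while $\Heav_\e(s_j^\e)\rightharpoonup\ol u_j$ weakly-$*$, the products converge, so the cross terms tend to $\nabla_xV_1\cdot f_2\,\ol u_2$ and $\nabla_xV_2\cdot f_1\,\ol u_1$ weakly in $L^2_{loc}$.

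Finally, passing to the weak limit in the equations and using that $s_i^{+}=s_i\,\ol u_i$ is consistent with the selection just obtained (the equality holds trivially on $\{s_i>0\}$, $\{s_i<0\}$ and $\{s_i=0\}$), one concludes that $(V_1,V_2)$ satisfies \eqref{HH} almost everywhere with $\ol u_i\in\Heav(\nabla_xV_i\cdot f_i+h_i)$, together with the initial condition $V_i(0,\cdot)=g_i$ inherited from the uniform convergence on the parabolic boundary. Thus $(V_1,V_2)$ is a strong solution in the sense of Definition~\ref{def}. I expect the delicate step to be precisely the identification of the weak-$*$ limit $\ol u_j$ with an admissible Heaviside selection: it is the uniform (rather than merely weak) convergence of the gradients, guaranteed by the uniform parabolic estimates, that confines the ambiguity to the level set $\{s_j=0\}$, where $\Heav$ is already multivalued and any value in $[0,1]$ is admissible.
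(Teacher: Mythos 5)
Your proof is correct, and it takes a genuinely reorganized route compared with the paper. The paper's proof smooths the Heaviside graph by a sequence $H_n$, solves the resulting Dirichlet problems on the bounded cylinders $B_{R,T}$, passes $n\to\infty$ there \emph{following the procedure of} \cite{PM} (to which the delicate identification of the discontinuous limit is entirely delegated), and only then exhausts the layer by letting $R\to\infty$ using the $R$-uniform estimate \eqref{VR0} and a diagonal extraction. You invert the two limits: for fixed $\e$ the ramp-regularized system has bounded data and composed Hamiltonians continuous in $(p_1,p_2)$, so the machinery of Theorem~\ref{existence} already produces a strong solution on all of $R_T$ — and you correctly invoke it at the level of its \emph{proof}, which is the right caveat, since $\Heav_\e(p_i\cdot f_i+h_i)$ is not an argmax selection for the true Hamiltonians and the argmax/GIC structure plays no role in that argument, only boundedness and continuity do. Your $\e$-uniform bounds are legitimate because $\Heav_\e$ ranges in $[0,1]$, so the constants in \eqref{Linfinito} and in the interior $W^{1,2}_q$/$H^{1+\alpha}$ estimates are independent of $\e$, exactly as the paper's constants in \eqref{VnunifH}--\eqref{VRH} are independent of $n$ and $R$. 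What your ordering buys is self-containedness at the crucial step the paper outsources: the weak-$*$ limit $\ol u_j$ of $\Heav_\e(s_j^\e)$ is pinned to $1$ on $\{s_j>0\}$ and $0$ on $\{s_j<0\}$ by the locally uniform convergence of the gradients, the ambiguity is confined to $\{s_j=0\}$ where $\Heav$ is multivalued, and — the point that genuinely must be checked and that you do check — the \emph{same} selection $\ol u_i$ serves simultaneously in the cross term of the other equation and in the diagonal term via the identity $s_i\,\ol u_i=s_i^{+}$ on all three level sets. What the paper's ordering buys, conversely, is the ability to quote the bounded-domain results of \cite{PM} verbatim before exhausting the layer. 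In substance both proofs rest on the same two pillars (regularization of the multivalued graph, plus regularization-uniform and domain-uniform parabolic estimates with a diagonal procedure), so your argument is a valid, more explicit alternative rather than a divergent one.
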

\begin{proof}
We follow the procedure used in \cite{PM} in bounded domains.
We approximate the Heaviside graph by a smooth sequence $H_n$ and we consider the solution $V_{in}^{R}$ of the corresponding
 Dirichlet problem in a bounded domain $B_{R,T}$ as in the previous theorems
 where the boundary condition is
 $V_{in}^{R}=g_i(x)$ on $\partial_pB_{R,T}$.
 Note that, as in the previous section (see proof of Theorem \ref{existence}), this auxiliary Dirichlet problem is compatible with our game problem in all $\re^N$.\\
 There exists an unique solution
 $\{V_{1n}^R, V_{2n}^R\}\in W^{1,2}(B_{R,T})$ and
 from the  boundedness of $H_n$ uniform on $n$ and the
 boundedness of $g_i$, we obtain uniform estimates on $n$:
 \begin{equation}\label{VnunifH}
\|V_{in}^R\|^{(2)}_{q,B_{R,T}}\leq  C\bigg(\|H_n\|_{q,B_{R,T}}, \|g_i\|^{2-1/q}_{q,\partial_pB_{R,T}}\bigg)=C(B_{R,T}),\ i=1,2.
\end{equation}
where the constant C is independent on $n$.\\
By means of the Sobolev embedding theorem we also have
\begin{equation}\label{VnholH}
\|V_{in}^R\|^{(1+\alpha)}_{B_{R,T}}\leq  C(B_{R,T}),\
\alpha=1-\mbox{$\frac{N+2}{q}$},\ i=1,2,
\end{equation}
where $C$ is independent of $n$.\\
Still following the procedure of \cite{PM} we find a solution of Problem
\eqref{HH}, $\{V_1^R, V_2^R\}$ in $B_{R,T}$ and, as in \eqref{VR0},
 we can say that for any $R_0>0$ and $V_{i}^R$ with $R>R_0$ we have
\begin{equation}\label{VRH}
\|V_{i}^R\|^{(2)}_{q,B_{R_0,T}}\leq  C(R_0),\ i=1,2,
\end{equation}
where the constant $C(R_0)$ depends on $R_0$ but not on $R$.
Hence passing to the limit as $R\rightarrow +\infty$ we obtain a strong solution
of  (\ref{HH}).
\end{proof}

After the previous example
we consider more general cases. Actually assume that: either
\ms

\noindent i) The game has separate
dynamics and running payoffs and the functions are $\re$-valued\\
or
\ms

\noindent ii) The control sets
are multidimensional compact sets and the dynamics is affine as in \eqref{faffine}.
\ms

For bounded domains this type of problem was considered in
\cite{PM2}, here we want to extend it to unbounded domains. We make
the following assumption: \medskip

\noindent {\bf Assumption (H2)}:
\begin{align}
& a) \,U_{i} \text{ are convex compact sets in } {\re^{k_i}},\ k_i\in\na,\ k_i\geq 1, \ i=1,2 ;\label{compatti}\\
& b) \,f: (0,T)\times \re^N\times U_{1}\times U_{2}\rightarrow  {\re}^{N},
f(t, x,u_{1}, u_{2})=f_{1}(t, x,u_{1})+f_{2}(t, x, u_{2})\\&\qq \mbox{with }f_{i}:
\ (0,T)\times\re^N\times U_{i}\rightarrow {\re}^{N}, f_{i}\in C^{1}([0,T]\times \re^N\times U_{i}) \mbox{ and bounded},\
i=1,2;\nn\\
%\end{align}
%\begin{align}\
%label{payoffseparato}
&c)\,h_i:\ (0,T)\times \re^N \times U_{1}\times U_{2}\rightarrow  {\re},
\,\,h_{i}(t, x,u_{1}, u_{2})= h_{i}(t, x, u_{i}),\mbox{ with }\nn\\
& \qq h_{i}:\ (0,T)\times\re^N \times U_{i} \rightarrow \re, h_{i}\in C^{1}([0,T]\times\re^N\times U_{i}) \mbox{
and bounded},\,
i=1,2;\label{payoffseparato} \\
%\end{align}
%\begin{equation}
&d)\,\,\forall p\in\re^N, A_i(t,x,p):= \argmax_{\{u_{i}\in
U_{i}\}}\big(p^\top\cdot
f_{i}(t,x,u_i)+h_{i}(t,x,u_i)\big) \nn \\&\qquad
 \mbox{ are convex sets in } \re^{k_i}, i=1,2. \label{assumeconvex}
\end{align}

We state now the existence theorem in the case i).
\begin{theorem}\label{existence4}
Let us suppose that $N=1$. Under assumptions \eqref{ipotesia2},
\eqref{ipotesig}, and (H2),
i.e.(\ref{compatti})-\eqref{assumeconvex}, there exists a strong
solution ($V_{1},\ V_{2}$)  of the parabolic system (\ref{HH}) in
the layer $R_T$.
\end{theorem}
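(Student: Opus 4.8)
The plan is to follow the two limiting procedures already used in Theorems~\ref{existence} and~\ref{existence3}: solve a regularised problem in the bounded cylinders $B_{R,T}$, and then let both the regularisation index and the radius $R$ go to infinity. The separate structure in (H2) produces a decisive simplification. Since $f=f_1(t,x,u_1)+f_2(t,x,u_2)$ and $h_i=h_i(t,x,u_i)$, the maximisation defining $\bar u_i$ decouples and $\bar u_i(t,x,p)\in A_i(t,x,p)$, so that player $i$'s equation can be written
\[
V_{it}-aV_{ixx}=\hat H_i(t,x,V_{ix})+V_{ix}\,\partial_p\hat H_j(t,x,V_{jx}),\qquad i\neq j,
\]
where $\hat H_i(t,x,p):=\max_{u_i\in U_i}\big(p\,f_i(t,x,u_i)+h_i(t,x,u_i)\big)$ is, being a maximum of affine functions, convex and locally Lipschitz in $p$, and where by the envelope theorem the coupling term $V_{ix}f_j(t,x,\bar u_j)$ equals $V_{ix}\,\partial_p\hat H_j(t,x,V_{jx})$. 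Thus the entire discontinuity of the system is concentrated in the derivative $\partial_p\hat H_j$ of a convex function, which jumps exactly at the $p$-values where $A_j(t,x,\cdot)$ is multivalued.

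First I would regularise $\hat H_j$, for instance by a Moreau–Yosida (inf–convolution) approximation $\hat H_j^{\,n}$, so that $\partial_p\hat H_j^{\,n}$ is continuous; this is the analogue of the Heaviside smoothing of Theorem~\ref{existence3}. For each $n$ the resulting system has continuous and bounded data, so Theorem~\ref{existence} (equivalently the linear theory of \cite{LSU} together with the maximum principle) gives a unique $V_{in}^R\in W^{1,2}_q(B_{R,T})$ with $V_{in}^R=g_i$ on $\partial_p B_{R,T}$, together with the uniform bounds $\|V_{in}^R\|^{(2)}_{q,B_{R,T}}\le C$ and $\|V_{in}^R\|^{(1+\alpha)}_{B_{R,T}}\le C$, where $C$ is independent of $n$ and $R$ because $f_i,h_i,g_i$ are bounded. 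Ascoli–Arzel\`a and weak $L^q$ precompactness then yield, along a subsequence, $V_{in}^R\to V_i^R$ and $V_{in,x}^R\to V_{i,x}^R$ in $C^0(B_{R,T})$, with $V_{in,t}^R$ and $V_{in,xx}^R$ converging weakly in $L^q$.

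The hard part is the limit $n\to\infty$ in the coupling term $V_{ix}^R\,\partial_p\hat H_j^{\,n}(t,x,V_{jx}^R)$, and this is exactly where the convexity of $A_j$ in (\ref{assumeconvex}) and the restriction $N=1$ are used. The regularised derivatives $\partial_p\hat H_j^{\,n}$ form a sequence of continuous monotone functions converging to the maximal monotone graph $\partial_p\hat H_j$; since $V_{jx}^R$ is obtained as a uniform limit, a Minty-type argument gives that their weak $L^q$ limit $\xi_j$ satisfies $\xi_j(t,x)\in\partial_p\hat H_j(t,x,V_{jx}^R)$ almost everywhere. The remaining and genuinely delicate point is to recover from $\xi_j$ a true measurable feedback, i.e.\ to show $\xi_j(t,x)=f_j(t,x,\bar u_j(t,x))$ for a measurable selection $\bar u_j(t,x)\in A_j(t,x,V_{jx}^R)$. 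Here $N=1$ is essential: then $f_j$ is scalar-valued, so $f_j(t,x,\cdot)$ maps the convex (hence connected) set $A_j$ onto an interval which already coincides with the convex subdifferential $\partial_p\hat H_j$, and the intermediate value theorem together with a measurable selection theorem produces $\bar u_j$. On the critical set where $A_j$ is multivalued, Stampacchia's lemma, giving $V_{j,xx}^R=0$ almost everywhere there, is the tool that makes this identification consistent with the equation. I expect this selection step to be the principal obstacle: in dimension $N>1$ the image $f_j(t,x,A_j)$ is connected but need not be convex, so a subgradient value need not be attained by an ordinary control, which is precisely why the multidimensional case is postponed to the affine hypothesis (ii) of (H2).

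Finally I would let $R\to\infty$ as at the end of Theorem~\ref{existence}: the maximum principle gives $\|V_1^R,V_2^R\|_\infty\le C$ with $C$ independent of $R$, hence the local bounds $\|V_i^R\|^{(2)}_{q,B_{R_0,T}}\le C(R_0)$ for each fixed $R_0$. A diagonal extraction then provides $V_i^R\to V_i$ with $\nabla_xV_i^R\to\nabla_xV_i$ pointwise, and $V_{i,t}^R,D^2_{xx}V_i^R$ converging weakly in $L^q(\Omega_T)$ on every bounded cylinder, so that $(V_1,V_2)$ is a strong solution of (\ref{HH}) in the whole layer $R_T$.
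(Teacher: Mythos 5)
Your proposal is correct in substance and reaches the result through the same overall skeleton as the paper --- regularise, solve Dirichlet problems on the cylinders $B_{R,T}$ as in Theorem~\ref{existence3}, extract limits via the uniform $W^{1,2}_q$ and $H^{1+\alpha}$ estimates, then diagonalise in $R$ --- but the mechanism at the crucial step (passing to the limit through the discontinuous feedback) is genuinely different. The paper's proof, which is only a few lines and defers the details to \cite{PM2}, regularises the multivalued feedback itself: by Cellina's approximate-selection theorem, the upper semicontinuous multifunction $A_i(t,x,p)$, whose values are convex precisely by hypothesis \eqref{assumeconvex}, admits continuous approximations $A_{in}$ with graphs in a $1/n$-neighbourhood of the graph of $A_i$; these continuous feedbacks are substituted into the system and the limit is identified as in \cite{PM2}. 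You instead concentrate the discontinuity in the $p$-derivative of the convex Hamiltonian $\hat H_j(t,x,p)=\max_{u_j\in U_j}\bigl(p\,f_j(t,x,u_j)+h_j(t,x,u_j)\bigr)$, smooth it by Moreau--Yosida, identify the weak limit of the coupling term by a Minty monotonicity argument (legitimate because $\nabla_x V_{jn}^R\to\nabla_x V_j^R$ uniformly, so the strong--weak pairing passes to the limit, and for $N=1$ the graph $\partial_p\hat H_j(t,x,\cdot)$ is maximal monotone), and then recover an ordinary measurable feedback by a Filippov-type selection, using that for $N=1$ and convex $A_j$ the image $f_j(t,x,A_j)$ is an interval that already equals the subdifferential. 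Both routes consume the hypotheses at the same places: \eqref{assumeconvex} enters for the paper as the hypothesis of Cellina's theorem and for you through the attainment of subgradients, while your closing observation --- that for $N>1$ the set $f_j(t,x,A_j)$ is connected but not convex, so Minty would only produce a relaxed control unless $f$ is affine in the controls --- is exactly the content of the paper's remark after the theorem restricting case (ii) to affine dynamics. What Cellina buys is economy: no monotone-operator machinery and no separate selection step, since the approximations are already admissible feedbacks near $A_i$; what your route buys is a structural explanation of precisely where $N=1$ and convexity are needed. One small remark: the Stampacchia-type lemma you invoke on the critical set is actually superfluous in your scheme, since the Minty argument already yields $\xi_j\in\partial_p\hat H_j(t,x,\nabla_x V_j^R)$ almost everywhere, critical set included; it is in the Heaviside/Cellina route of \cite{PM} that such a lemma is used to identify the limit there.
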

\begin{proof}
 We follow the lines of the proof in \cite{PM2}.
 By Cellina's approximation theorem we find a sequence $A_{in}(t,x,p)$ in a $1/n$-neighbourhood of
 the graph of \\
 $A_i(t,x,p):= \argmax_{\{u_{i}\in U_{i}\}}\big(p\cdot f_{i}(t,x,u_i)+h_{i}(t,x,u_i)\big)$.
 We consider the solution $V_{in}^{R}$ of the corresponding
 Dirichlet problem in a bounded domain $B_{R,T}$ as in Theorem \eqref{existence3}.
 We obtain uniform estimate on $n$
 and the convergence to $V_{i}^{R}$ solution of system (\ref{HH}) in
 $B_{R,T}$.
 Hence by the diagonal procedure and passing to the limit as $R\rightarrow +\infty$ we obtain a strong solution
of  (\ref{HH}) in the layer $R_T$.
 \end{proof}
\begin{remark}The case ii) can be treated analogousy if additionnally to assumptions (H2) we require that 
$$
f(t, x,u_{1}, u_{2})= f_{1}(t, x) u_{1}+f_{2}(t, x) u_{2}.$$
However we do not require $N=1$ as in Theorem \eqref{existence4}. \qed  
\end{remark}
\subsection{Unbounded data, discontinuous Hamiltonian, unbounded domains.}\label{allunbound}
In this section we want to study from the PDEs point of view
the game studied in the paper \cite{HaMu} with probabilistic tools.
We consider a stochastic game where the drift of the dynamics of the system is of type
$$f(t, x,u_{1}, u_{2})=f_{1}(t, x)u_{1}+f_{2}(t, x) u_{2}+\varphi(t,x),$$
where
\begin{equation}\label{linear} \left\{\begin{array}{l}
a)\,\, f_i,\,i=1,2,\,\mbox{ and }\varphi \text{ are continuous };  \\\\
b) \,\,\max\{|f_1(t,x)|,|f_2(t,x)|,|\varphi(t,x)|\}\leq C(1+|x|),\,\forall  (t,
x)\in R_T.    \end{array} \right.
\end{equation}
Moreover we suppose that the terminal payoffs satisfy: 
\begin{equation}\label{gpoly} \mbox{For }i=1,2, \,\, 
g_i \text{ is continuous and }|g_i(x)|\leq C(1+|x|^{\beta}),\ \beta\geq 1,\   x\in \re^N.
\end{equation}
Without loss of generality we suppose that the running payoffs
$h_i=0$, $i=1,2$, and $U_1=U_2=[0,1]$.
%We suppose that the function $\varphi$ can be unbounded, in particular
%we take assumption i) and ii) of (A2).
In this case the Hamiltonians become

\begin{align}\label{prehamlineareinfi}
&H_{1}(t, x,p,u_{1}, u_{2})= p\cdot \left(f_{1}(t,
x) u_1+
f_{2}(t, x)u_{2}+ \varphi(t,x)\right),\\
&H_{2}(t, x,p,u_{1},u_{2})=p\cdot \left(f_{1}(t, x)
u_1+ f_{2}(t, x)u_{2}+ \varphi(t,x)\right).\nn
\end{align}
Hence also in this case the optimal feedbacks \\$\ol u_{i}(t,x,p)\in
\argmax_{\{u_{i}\in U_{i}\}}H_{i}(t,
x,p,u_{i})=\Heav(p\cdot f_i(t,x))$ are not
continuous with respect to the variable $p$, and then the Hamiltonians do not have continuous dependence on
$\nabla_xV_{1}$, $\nabla_xV_{2}$.\\
The system (\ref{sistemagenerale1})-(\ref{sistemagenerale3})
becomes: $\forall (t,x)\in R_T$, 
 \begin{align}
&V_{1t}-\sum_{h,k}a_{hk}V_{1x_{h} x_{k}} =
\nabla_xV_{1}\cdot (
f_{1}\ol u_1+f_{2}\, \ol u_2 +\varphi(t,x)),\label{HHD}\\
&V_{2t}-\sum_{h,k}a_{hk}V_{2x_{h} x_{k}} =
\nabla_xV_{2}\cdot (
f_{1}\ol u_1+f_{2}\, \ol u_2 +\varphi(t,x)),\nn\\&V_i(0,x)=g_i(x),\nn\\
&\ol u_{1}(t,x, \nabla_{x}V_{1})\in \mathrm{Heav}(\nabla_{x} V_{1}\cdot f_{1}(t,x)),\nn\\
&\ol u_{2}(t,x, \nabla_{x}V_{2})\in \mathrm{Heav}(\nabla_{x}V_{2}\cdot f_{2}(t,x)).\nn
\end{align}
and can be written also as: $\forall (t,x)\in R_T$, 
\begin{align}\label{unbound}
&V_{1t}-a_{hk}V_{1x_{h} x_{k}}
\in \nabla_xV_{1}\cdot \left(
f_{1} \mathrm{Heav}(\nabla_xV_{1}\cdot f_{1})+f_{2}\, \mathrm{Heav}(\nabla_xV_{2}\cdot f_{2})+\varphi(t,x)\right)\,;\\
&V_{2t}-a_{hk}V_{2x_{h} x_{k}} \in \nabla_xV_{2}\cdot \left(
f_{1} \mathrm{Heav}(\nabla_xV_{1}\cdot f_{1})+f_{2}\, \mathrm{Heav}(\nabla_xV_{2}\cdot f_{2})+\varphi(t,x)\right)\,;\nn\\
&V_i(0,x)=g_i(x),\ x\in\re^N.\nn
\end{align}
This is a system with discontinuous and unbounded terms in an unbounded domain.
Here below we obtain two existence results. The first one gives a weak solution in all the strip $R_T$ and could be considered as a general result for systems with unbounded coefficients and discontinuous Hamiltonians. The second one comes directly from the procedure used in the previous sections and allows us to find a more regular solution but only in the bounded subdomains of  $R_T$.
We want to write here both the results even if for the existence of Nash equilibria is
 sufficient only the second one.\\
To obtain the first existence result we give a
suitable definition of solution in the spaces $L^{2,\infty}(R_T)$
and $L^{2,2}(R_T)$, following \cite{Ar} and \cite{Ik}:
\begin{definition}
The space $L^{p,q}(Q_T):=L^{q}\left[(0,T), L^p(Q)\right]$ is the
space where we define the following norm: For $w\in L^{p,q}(Q_T)$ we
have
$$\|w\|_{p,q,Q}=\left\{\int_0^T(\int_Q |w(t,x)|^p dx) ^{q/p} dt \right\}^{1/q}.$$
In the case either $p$ or $q$ are infinite $\|w\|_{p,q,Q}$ is definite in a
similar way using $L^{\infty}$ norm:
$$\|w\|_{p,\infty,Q}=essup_{(0,T)}(\int_Q |w(t,x)|^p dx) ^{1/p}.$$
\end{definition}
\begin{definition}\label{defweak} A pair $\{V_1,V_2\}$ is said a weak
solution of Problem \eqref{unbound} in $R_T=(0,T)\times \re^N$ for
the initial condition $g_i(x)\in L^2_{loc}(\re^N)$ if $V_i(t,x)\in
L^{\infty}\left[(0,T), L^2_{loc}(\re^N)\right]\cap L^{2}[ (0,T),
H^{1,2}_{loc}(\re^N)]$ and if $V_i$, $i=1,2$, satisfy
\begin{align}
&\int\!\!\!\int_{R_T}\{-V_{1}\Phi_t+a_{hk}V_{1x_{h}}\Phi_{x_{k}} +a_{hkx_k}V_{1x_{h}}\Phi \label{weakversion}
\\
&\qquad-\nabla_xV_{1}\cdot \left( f_{1}
\mathrm{Heav}(\nabla_xV_{1}\cdot f_{1})+f_{2}\,
\mathrm{Heav}(\nabla_xV_{2}\cdot f_{2})+\varphi(t,x)\right)
\Phi(t,x)\}\, dx\,dt=0,\nn\\
&\int\!\!\!\int_{R_T}\{-V_{2}\Phi_t+a_{hk}V_{2x_{h}}\Phi_{x_{k}}+{a_{hk}}_{x_{k}}V_{2x_{h}}\Phi\nn\\
&\qquad- \nabla_xV_{2}\cdot \left( f_{2}
\mathrm{Heav}(\nabla_xV_{2}\cdot f_{2})+f_{1}\,
\mathrm{Heav}(\nabla_xV_{1}\cdot 
f_{1})+\varphi(t,x)\right)\Phi(t,x)\}\,dx\,dt=0,\nn
\end{align}
for any $\Phi\in C^1_0(R_T)$. Moreover $V_i(0,x)= g_i(x)$, $x\in\re^N$, $i=1,2$.
%and $V_i\to g_i$ in a weak sense (as above) if $t\to 0$.
\end{definition}

\begin{theorem}
Under assumption \eqref{ipotesia2}, \eqref{linear}, \eqref{gpoly} there exists a weak solution of the Cauchy problem \eqref{unbound}.
Moreover the solution $\{V_1$, $V_2\}$ are locally H\"{o}lder continuous on $R_T$ and satisfy the following estimate
%\begin{equation}\label{growth}
%|V_i(t,x)|\leq C(1+e^{|x|^{\lambda}}), \lambda<2, (t,x)\in R_T,
%\end{equation}
\begin{equation}\label{growth}
|V_i(t,x)|\leq C(1+|x|^{\beta}),\ \beta\geq 1,\ \forall (t,x)\in R_T,
\end{equation}
where $\beta$ is the growth exponent of assumption \eqref{gpoly}.
\end{theorem}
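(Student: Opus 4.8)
The plan is to follow the by-now familiar scheme of regularization, uniform a priori estimates, and two successive limit passages, but in the weaker energy framework of Definition \ref{defweak} in order to cope with the simultaneous loss of boundedness and of continuity. First I would smooth the Heaviside graph by a sequence $H_n$, exactly as in the proof of Theorem \ref{existence3}, and solve the resulting quasilinear Dirichlet problem in the bounded cylinders $B_{R,T}$ with data $g_i$ on $\partial_p B_{R,T}$, producing approximants $V_{in}^R$. The structural observation that makes this tractable is that the diagonal terms $\nabla_x V_i\cdot f_i\,\mathrm{Heav}(\nabla_x V_i\cdot f_i)$ equal the positive parts $(\nabla_x V_i\cdot f_i)^{+}$ and are therefore already continuous (indeed Lipschitz) in the gradient; the genuine difficulty is confined to the off-diagonal coupling term $\nabla_x V_1\cdot f_2\,\mathrm{Heav}(\nabla_x V_2\cdot f_2)$ in the first equation (and symmetrically in the second), where the discontinuous graph is evaluated at the gradient of the \emph{other} component.

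The key a priori bound is the polynomial growth estimate \eqref{growth}, which I would obtain uniformly in $n$ and $R$ from the stochastic representation. Since $h_i=0$, the It\^o--Krylov formula applied to the smooth approximants (as in \eqref{eq:vlocal}) gives $V_{in}^R(t,x)=E^{\ol u_1,\ol u_2}[g_i(X_{\tau_R})]$, where under the relevant probability $X$ solves an SDE whose drift $f_1\ol u_1+f_2\ol u_2+\varphi$ is bounded by $C(1+|x|)$ independently of the feedbacks (which take values in $[0,1]$). Standard moment estimates for SDEs with linear-growth drift and bounded diffusion then yield $E[|X_{\tau_R}|^{\beta}]\leq C(1+|x|^{\beta})$, and combined with \eqref{gpoly} this produces \eqref{growth} with a constant free of $n$ and $R$; a purely PDE barrier of the form $e^{Kt}(1+|x|^2)^{\beta/2}$ would give the same bound, the exponential time factor absorbing the first-order growth. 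On every fixed bounded subcylinder the linear-growth coefficients are then bounded, so the interior $W^{1,2}_q$ theory and the De Giorgi--Nash--Moser H\"older estimates (as in \cite{LSU}) supply bounds uniform in $n$ and $R$ and, in particular, the local H\"older continuity asserted in the statement. To reach the global weak formulation I would complement these with energy estimates in the spirit of \cite{Ar,Ik}: testing each equation against $V_{in}^R$, the ellipticity \eqref{ipotesia2} controls the local $L^2[(0,T),H^{1,2}]$ norm of the gradient while \eqref{growth} controls the $L^{\infty}[(0,T),L^2_{\mathrm{loc}}]$ norm, placing the approximants, uniformly, in the spaces of Definition \ref{defweak}.

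The two limit passages are then carried out in order. Letting $n\to\infty$ first, the uniform local estimates give a.e.\ convergence of $V_{in}^R$ and of $\nabla_x V_{in}^R$ along a subsequence; the diagonal positive-part terms pass to the limit by continuity, while for the off-diagonal terms I would use the a.e.\ convergence of the gradients together with the closedness of the maximal monotone Heaviside graph to identify the limit of $H_n(\nabla_x V_{2n}^R\cdot f_2)$ as an element of $\mathrm{Heav}(\nabla_x V_2^R\cdot f_2)$, yielding a solution $\{V_1^R,V_2^R\}$ of \eqref{unbound} in $B_{R,T}$ in the differential-inclusion sense. Letting $R\to\infty$ by the usual diagonal extraction, the local estimates and the uniform growth bound \eqref{growth} deliver functions $\{V_1,V_2\}$ in $L^{\infty}[(0,T),L^2_{\mathrm{loc}}(\re^N)]\cap L^{2}[(0,T),H^{1,2}_{\mathrm{loc}}(\re^N)]$ that are locally H\"older continuous, obey \eqref{growth}, attain the data $g_i$, and satisfy the weak identities \eqref{weakversion}.

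I expect the main obstacle to be precisely the handling of the discontinuous off-diagonal coupling under the limit in $n$: because the Heaviside is evaluated at the gradient of the other unknown, mere weak convergence of the gradients is insufficient, and one must secure their a.e.\ (equivalently, strong $L^2_{\mathrm{loc}}$) convergence so that the graph-closure argument applies on the set where $\nabla_x V_j\cdot f_j\neq 0$, the inclusion being automatic on the degenerate set $\{\nabla_x V_j\cdot f_j=0\}$. Establishing this strong gradient convergence while only the growth estimate \eqref{growth} is available in the unbounded tails, where the coefficients themselves grow, is the technically delicate point that links the discontinuity and the unboundedness, and it is where the local compactness furnished by the interior parabolic estimates must be exploited most carefully.
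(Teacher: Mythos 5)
Your proposal is sound, but it follows a genuinely different route from the paper's proof of this theorem --- in fact it is essentially the technique the paper uses for Theorem \ref{existence3} and again for the Corollary immediately following this theorem (smooth the Heaviside by a sequence $H_n$, solve Dirichlet problems in the cylinders $B_{R,T}$, pass to the limit first in $n$ and then in $R$ via interior estimates and closedness of the maximal monotone graph). The paper's proof here neither regularizes the Heaviside nor introduces an artificial lateral boundary: it linearizes by a bootstrap in which $V_i^n$ solves a \emph{linear} Cauchy problem on the whole strip $R_T$ with the feedbacks $(\ol u_1^{(n-1)},\ol u_2^{(n-1)})$ frozen from the previous iterate, so that each equation has a merely measurable, $[0,1]$-valued coefficient multiplying a linear-growth drift; existence and the uniform-in-$n$ bounds then come from Ikeda's theorem for linear parabolic Cauchy problems with discontinuous unbounded coefficients, built on Aronson's results, in the Gaussian-weighted form \eqref{stimen}. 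A step with no counterpart in your scheme is needed there: Ikeda--Aronson give existence only up to a time $T'\le C/\gamma$, where $\gamma$ is the exponent in the weight condition $e^{-\gamma|x|^2}g_i\in L^2(\re^N)$, and the paper observes that the polynomial growth \eqref{gpoly} permits every $\gamma>0$, whence $T'=T$. The limit solution is then produced by weak $L^{2,2}$ compactness, Aronson's local $L^{2p',2q'}$ bounds and uniform convergence on compacta, and the local H\"older continuity is quoted from Aronson's interior continuity theorem rather than re-derived by De Giorgi--Nash--Moser. Your derivation of \eqref{growth} coincides with the paper's (Girsanov representation $V_i^n(T-t,x)=E[g_i(X^{t,x}_T)]$ plus a $\beta$-moment estimate with constants independent of the iteration index); the barrier $e^{Kt}(1+|x|^2)^{\beta/2}$ is a legitimate alternative, and your observation that the diagonal terms reduce to the Lipschitz expressions $(\nabla_x V_i\cdot f_i)^{+}$ is correct and not exploited in the paper. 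As for trade-offs: your route delivers more (interior $H^{1+\alpha}\cap W^{1,2}_q$ regularity, essentially the Corollary's strong solution), and the strong local gradient convergence you single out as the delicate point is in fact secured by the uniform interior $W^{1,2}_q$ bounds with $q>N+2$ plus Sobolev embedding, exactly as in Theorem \ref{existence3}; the paper's route, by contrast, stays entirely in the weak energy framework, absorbs the discontinuity at the linear level, and avoids compatibility issues at the artificial boundary --- a real point in its favor here, since the theorem assumes only continuity and polynomial growth of $g_i$, so that to launch the $W^{1,2}_q$ theory up to $\partial_p B_{R,T}$ your scheme needs an additional mollification of the initial data (or the local $H^{1+\alpha}$ hypothesis \eqref{ipotesig}), a small patch you should make explicit.
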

\begin{proof}
We use the results of \cite{Ar} and \cite{Ik} where linear parabolic Cauchy problems with possibly discontinuous terms respectively in bounded and unbounded domains are considered.
To get a linear system, we use a bootstrap argument by defining a sequence of solutions
$V_{1}^n$, $V_{2}^n$ of the following problem
\begin{align}
&V_{1t}^n-\sum_{h,k}a_{hk}V_{1x_{h} x_{k}}^n
= \nabla_xV_{1}^n\cdot(
f_{1}\ol u_1^{(n-1)}+f_{2}\, \ol u_2^{(n-1)} +\varphi),\ in \ R_T;\label{HHn}\\
&V_{2t}^n-\sum_{h,k}a_{hk}V_{2x_{h} x_{k}}^n
= \nabla_xV_{2}^n\cdot(
f_{1}\ol u_1^{(n-1)}+f_{2}\, \ol u_2^{(n-1)} +\varphi),\ in \ R_T;\nn\\
&\ol u_{1}^{(n-1)}(t,x, \nabla_{x}V_{1}^{(n-1)})\in \mathrm{Heav}(\nabla_{x} V_{1}^{(n-1)}\cdot f_{1});\nn\\
&\ol u_{2}^{(n-1)}(t,x, \nabla_{x}V_{2}^{(n-1)})\in \mathrm{Heav}(\nabla_{x}V_{2}^{(n-1)}\cdot f_{2});\nn\\
&V_i^n(0,x)=g_i(x),\ x\in\re^N.\nn
\end{align}
Conditions (A) p.34 of \cite{Ik} are satisfied and, from the
equiboundedness of $\overline u_i$,  the constants involved in this
condition  depend only on the growth
assumption \eqref{linear}, in particular are independent on $n$. Hence we can apply Theorem 2 p.41 of
\cite{Ik} and then there exists an unique weak solution
$\{V_{1}^n,V_{2}^n\}$ of problem \eqref{HHn} in $R_{T'}=(0,T')\times\re^N$ where $T'$ depends on the constant of conditions (A) p.34 of \cite{Ik}, i.e on the growth assumption \eqref{linear} thus it is independent on $n$.\\
We now prove that in our case $T'=T$. Indeed the existence result cited above
(Theorem 2 p. 41 of \cite{Ik}) is based on Theorem 3 p. 639 of  \cite{Ar}. In this Theorem the author finds
$T\leq \frac{C}{\gamma}$ where $C$ depends only on the bound of the diffusion term $\sigma$ and $\gamma$ comes from the assumption on the initial data:
\begin{equation}\label{gar}
e^{-\gamma |x|^2} g_i(x)\in L^2(\re^N).
\end{equation}
In our case since $g_i$ have polynomial growth (see assumption \eqref{gpoly}), assumption \eqref{gar} is satisfied for any $\gamma>0$.  Hence for any $T>0$ we can choose a sufficiently small $\gamma$ such that $T\leq \frac{C}{\gamma}$, thus the existence of the weak solution $\{V_{1}^n, V_{2}^n\}$ of the problem \eqref{HHn} is proved for any $T>0$.\\
%Lemma 3.1 of \cite{Ik} where the author finds weighted estimates for the solution under the assumption that
%$e^{-(1+|x|^2)^{\lambda}}g_i(x)\in L^2(\re^N)$ for some $\lambda$.
%In our case since $g_i$ have polynomial growth (see assumption \eqref{gpoly}),
%we can take $\lambda$ as small as we want.
%Then, in our case, we do not have to take any constraint on $t$ such that
%the function $h$ in the proof of such Lemma is defined (see the definition of $\alpha(t)$
%and of $A$ and $B$ at p.39, taking account that $A$ can be as small as we want).\\
Moreover, still from Theorem 2 p. 41 of \cite{Ik},   there exists a constant $\mu$ independent on $n$ such that the following estimates hold
\begin{equation}\label{stimen}
\|e^{-\mu(1+|x|^2)^{\lambda}}V_i^n\|^2_{2,\infty, R_{T}}+\|e^{-\mu(1+|x|^2)^{\lambda}}\nabla_{x}V_i^n\|^2_{2, 2, R_{T}}\leq C,
\end{equation}
where $\lambda$ is any number in $(0,1]$ and $C$ depends only on the
data i.e. is independent on $n$. At this point, following the
argument of \cite{Ar} for the proof of Theorem 3 p. 640-641, from
the weak compactness of $L^{2,2}$, up to subsequences, we have that
there exist $V_i$ and $\overline V_i$ such that
\begin{eqnarray*}
&e^{-\mu(1+|x|^2)^{\lambda}}V_i^n\to e^{-\mu(1+|x|^2)^{\lambda}}V_i\\
&e^{-\mu(1+|x|^2)^{\lambda}}\nabla_{x}V_i^n\to   e^{-\mu(1+|x|^2)^{\lambda}}\overline V_i,
\end{eqnarray*}
where the convergence is weak in $L^{2,2}(R_{T})$ and  $\lambda\in(0,1]$.\\
Let us consider now a bounded domain in $R_{T}$, $(0,T)\times B_R$ with $R$ fixed.
Following the procedure of \cite{Ar} p. 641, using the fact that
$V_{i}^n$ are weak solutions of \eqref{HHn} in the sense of definition \eqref{defweak},
we obtain that
$$V_{i}^n\to V_i,\  \nabla_{x}V_i^n\to \nabla_{x}V_i,$$
weakly in $L^{2,2}((0,T)\times B_R)$ for any $R$.
Hence $\overline V_i =\nabla_{x}V_i$ in the sense of distributions.
Note that, from \eqref{stimen}, also the limit function satisfies
\begin{equation}\label{estVi}
\|e^{-\mu(1+|x|^2)^{\lambda}}V_i\|^2_{2,2, R_{T}}+\|e^{-\mu(1+|x|^2)^{\lambda}}\nabla_{x}V_i\|^2_{2, 2, R_{T}}\leq C,
\end{equation}
and $V_i\in L^{2}[ (0,T), H_{loc}^{1,2}(R_{T})]$. Moreover from
estimates \eqref{stimen} and Lemma 3 p. 633 of \cite{Ar} we know
that
$$\|e^{-\mu(1+|x|^2)^{\lambda}}V_i\|^2_{2, \infty, R_{T}}\leq C$$ and hence
$V_i\in  L^{\infty}\left[ (0,T), L^2_{loc}(\re^N)\right]$. Finally
from Lemma 2 p. 624 of \cite{Ar} we deduce that
$$\|V_i^n\|_{2p', 2q', (0,T)\times B_R}\leq C$$
where $p'$ and $q'$ are values whose H\"{o}lder conjugates $p$ and $q$ satisfy
$\frac{N}{2p}+\frac{1}{q}\leq 1$.
Hence, up to subsequences,  we have
$V_i^n\to V_i$ weakly in the space $L^{2p', 2q'}((0,T)\times B_R).$
If we take now a test function $\Phi$ with compact support, then
from Definition \eqref{defweak}, letting $n\to \infty$  it follows that $V_i$ is a weak solution of the Cauchy problem \eqref{unbound} with the required regularity of Definition
\eqref{defweak}.
%Since estimate \eqref{estVi} holds, then from Theorem 1 of \cite{Ik}
%(or Theorem 2 p. 639 of \cite{Ar}), the solution is unique.
Moreover following Corollary 3.1 of \cite{Ar} the sequence $(V_i^n)_n$ converges uniformly
to $V_i$ in any compact subset of $R_T$.

Next let us show estimate \eqref{growth}. It is enough to show that for some
positive constant $C>0$, 
$$|V_i^n(t,x)|\leq
C(1+|x|^\beta),\,\,\forall (t,x)\in [0,T]\times \re^N, i=1,2.$$ 
Let
$(t,x)\in \esp$ be fixed and let $(X^{t,x}_s)_{s\in [t,T]}$ be the
solution of the following stochastic differential equation:
$$\begin{array}{l}
\xt_s=x+\int_t^s\s(r,\xt_r)dB_s, s\in [t,T] \mbox{ and }\xt_s=x \mbox{ for }s\in [0,t].
\end{array}$$
On the other hand let $\bar u_1^{n-1}$ and $\bar u_2^{n-1}$ be the stochastic processes defined by:
$\forall s\in [0,T]$,
$$\begin{array}{l}
\bar u_1^{n-1}(s)=
\ol u_{1}^{(n-1)}(s,\xt_s, \nabla_{x}V_{1}^{(n-1)}(s,\xt_s))\mbox{ and }\bar u_2^{n-1}(s)=\ol u_{2}^{(n-1)}(s,\xt_s, \nabla_{x}V_{2}^{(n-1)}(s,\xt_s)).\ea
$$
Finally let $P^{\bar u_1^{n-1},\bar u_2^{n-1}}$ be the probability on $\O$ (\cite{karatzasshreve}, pp.200) such that
$$
dP^{\bar u_1^{n-1},\bar
u_2^{n-1}}=\zeta_T\{\int_0^{.}\sigma^{-1}(s,\xt_s)\Psi(s,\xt_s,\ol
u_1^{n-1}(s),\ol u_2^{n-1}(s))dB_s\}.dP$$ where
$\Psi(s,x,u_1,u_2):=f_{1}(t, x) u_1+ f_{2}(t, x)u_{2}+
\varphi(t,x)$. Under $P^{\bar u_1^{n-1},\bar u_2^{n-1}}$, the
dynamics of $\xt$ is the following:
%$$
%\xt_s=x+\int_t^s\Psi(s,\xt_s,\ol u_1^{n-1}(s),\ol
%u_2^{n-1}(s))ds+\int_t^s\s(r,\xt_r)dB^{\bar u_1^{n-1},\bar
%u_2^{n-1}}_s, s\in [t,T] \mbox{ and }\xt_s=x \mbox{ for }s\in [0,t]
%$$
\begin{eqnarray*}
&&\xt_s=x+\int_t^s\Psi(s,\xt_s,\ol u_1^{n-1}(s),\ol
u_2^{n-1}(s))ds+\int_t^s\s(r,\xt_r)dB^{\bar u_1^{n-1},\bar
u_2^{n-1}}_s,\mbox{ for }s\in [t,T],\\
&&\xt_s=x \mbox{ for }s\in [0,t]
\end{eqnarray*}
where $B^{\bar u_1^{n-1},\bar u_2^{n-1}}$ is a Brownian motion under
$P^{\bar u_1^{n-1},\bar u_2^{n-1}}$.\\
 Thanks to the uniform
linear growth of $\Psi$, we have (see \cite{karatzasshreve}, p.306),
\begin{equation}\label{estx}
E^{\bar u_1^{n-1},\bar u_2^{n-1}}[\sup_{r\leq T}|\xt_r|^\beta]\le C(1+|x|^\beta)
\end{equation}where $C$ is a constant independent of $n$. On the other hand as in the proof of Theorem \ref{nashpoint} of the next section,
\begin{equation}\label{estimvn}
V_i^n(T-t,x)=E^{\bar u_1^{n-1},\bar u_2^{n-1}}[g^i(\xt_T)].
\end{equation}
Now, as $g^i$, $i=1,2$, have polynomial growth (see \eqref{gpoly}), then by \eqref{estx}
there exists a positive constant $C$ such that, for $i=1,2$, 
\begin{equation}\label{estimvn2}
|V_i^n(t,x)|\le C(1+|x|^\beta)
\end{equation}which is the claim.

Next the local H\"{o}lder continuity of $V_i$ follows from its local
boundedness \eqref{growth} together with the Interior H\"{o}lder
Continuity result recalled in Theorem C p.616 of \cite{Ar} (one can
see also the H\"{o}lder regularity result proved for linear equations in \cite{LSU}, Theorem
10.1, p.204).
\end{proof}

To prove the existence of Nash equilibria we look for solutions as
in the previous sections, i.e. that satisfy system \eqref{unbound}
almost everywhere and that belong to $W^{1,2}_q$ in any bounded subdomain
of $R_T$. Hence we have to introduce a definition of strong solution
(it is analogous to the definition of strong solution \eqref{def}
but we have dropped the boundedness in $R_T$):
\begin{definition}\label{defunb}
$(V_{1},V_{2})$ is a {\em strong} solution
of the system \eqref{unbound},  if
\begin{align}
&a)\ V_{1}(t,x),\ V_{2}(t,x) \in H^{1+\alpha}(\ol \Omega_{T})\cap
W^{1,2}_{q}(\Omega_{T}), \\& \qq \qq \qq \text{ for any bounded subdomain } \Omega_T \subset R_T, \ \alpha\in (0,1),\ q>N+2\,;\nn\\
&c)\text{ Equations}\
\eqref{unbound}\text{ hold almost everywhere in } \Omega_T \text{ and }\
V_i(0,x)= g_i(x) \text{ holds in } \Omega.\nn
\end{align}
\end{definition}
Using the same technique used to prove Theorem \eqref{existence3},
we obtain the following result which gives appropriate regularity of
$V_1$, $V_2$ to obtain the existence of Nash equilibria.
\begin{corollary}
Under assumption (A1), \eqref{linear}, \eqref{gpoly} there exists a strong solution of the Cauchy problem \eqref{unbound}.
%\begin{equation}\label{growth}
%|V_i(t,x)|\leq C(1+|x|^{\beta}),\ \beta>1, (t,x)\in R_T,
%\end{equation}
\end{corollary}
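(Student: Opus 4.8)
The plan is to reproduce, essentially verbatim, the construction of Theorem \ref{existence3}, the only genuinely new feature being that $f_1,f_2,\varphi$ now grow linearly and $g_i$ polynomially, so that the $R$-uniform sup-norm bound \eqref{Linfinito} obtained there through the maximum principle is no longer at our disposal. First I would smooth the Heaviside graph by a bounded sequence $H_n$ and, for each fixed radius $R>0$, solve the regularized Dirichlet problem associated with \eqref{unbound} in the cylinder $B_{R,T}$ with boundary datum $g_i$ on $\partial_pB_{R,T}$. On the \emph{fixed} bounded domain $B_{R,T}$ the coefficients $f_1,f_2,\varphi$ are bounded (by constants depending on $R$ through the linear growth \eqref{linear}) while the approximate feedbacks $\bar u_i^n$ take values in $[0,1]$; hence each equation is linear, uniformly parabolic and has bounded coefficients, and the theory invoked in Theorem \ref{existence} (Theorem~9.1 and Lemma~3.3 of \cite{LSU}) yields a unique $V_{in}^R\in W^{1,2}_q(B_{R,T})$ with $n$-uniform $W^{1,2}_q$ and, by Sobolev embedding, $H^{1+\alpha}$ bounds. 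The bootstrap passage $n\to\infty$ is then identical to Theorems \ref{existence} and \ref{existence3}: the discontinuous Heaviside nonlinearity is treated as in \cite{PM}, using that on the critical set $\{\nabla_xV_i^R\cdot f_i=0\}$ the product $\nabla_xV_i^R\cdot f_i\,\bar u_i$ vanishes irrespective of the multivalued feedback, so that $H_n$ may be replaced by its limiting graph almost everywhere. This produces a strong solution $V_i^R$ of \eqref{unbound} in $B_{R,T}$.

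The main obstacle is the final passage $R\to\infty$, precisely because the loss of \eqref{Linfinito} obstructs the direct derivation of the $R$-independent interior estimate \eqref{VRH}. I would circumvent this by replacing the maximum-principle argument with the probabilistic growth estimate already exploited for the weak solution. Applying the It\^o--Krylov formula to $\bar V_i^R:=V_i^R(T-\cdot,\cdot)$ along $X^{t,x}$ stopped at $\tau_R$, exactly as in \eqref{eq:vlocal} (here $h_i\equiv 0$), one obtains the representation $V_i^R(T-t,x)=E^{\bar u_1,\bar u_2}[g^i(X^{t,x}_{\tau_R})]$ with bounded feedbacks. Since the drift $\Psi$ has linear growth, estimate \eqref{estx} gives $E^{\bar u_1,\bar u_2}[|X^{t,x}_{\tau_R}|^{\beta}]\le C(1+|x|^{\beta})$ with $C$ independent of $R$, whence by the polynomial growth \eqref{gpoly} of $g_i$, arguing as for \eqref{estimvn2},
\begin{equation}
|V_i^R(t,x)|\le C(1+|x|^{\beta}),\qquad (t,x)\in B_{R,T},\ i=1,2,
\end{equation}
with $C$ independent of $R$. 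This is the crucial substitute for \eqref{Linfinito}.

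With this growth bound in hand I would fix $R_0>0$ and work on the subcylinder $B_{R_0,T}$ for all $R>R_0$: there the coefficients are bounded by constants depending only on $R_0$, and the estimate above gives $\|V_i^R\|_{L^\infty(B_{R_0,T})}\le C(1+R_0^{\beta})=:C(R_0)$, uniformly in $R$. Interior $W^{1,2}_q$ parabolic estimates (applied on $B_{R_0,T}$ using the bounds on a slightly larger cylinder) then furnish
\begin{equation}
\|V_i^R\|^{(2)}_{q,B_{R_0,T}}\le C(R_0),\qquad i=1,2,
\end{equation}
with $C(R_0)$ independent of $R$, the exact analogue of \eqref{VRH}. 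Finally I would run the diagonal procedure as $R\to\infty$: extracting a subsequence, $V_i^R$ and $\nabla_xV_i^R$ converge uniformly on compact subsets of $R_T$ while $\partial_tV_i^R$ and $D^2_{xx}V_i^R$ converge weakly in $L^q(\Omega_T)$ on every bounded $\Omega_T\subset R_T$. The limit $(V_1,V_2)$ then lies in $H^{1+\alpha}(\bar\Omega_T)\cap W^{1,2}_q(\Omega_T)$ for each such $\Omega_T$ and satisfies \eqref{unbound} almost everywhere together with the initial condition, hence is a strong solution in the sense of Definition \ref{defunb}. The only delicate point in this last limit is again the discontinuous feedback, handled exactly as in the $n$-limit, through the vanishing of $\nabla_xV_i\cdot f_i\,\bar u_i$ on the critical set.
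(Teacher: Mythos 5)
Your proposal is correct and takes essentially the same route the paper intends: its ``proof'' of the corollary is a one-line pointer to the technique of Theorem \ref{existence3} (smoothed Heaviside, Dirichlet problems in $B_{R,T}$, $n$-limit as in \cite{PM}, then $R\to\infty$), and the $R$-uniform substitute for \eqref{Linfinito} that you supply --- the It\^o--Krylov representation $V_i^R(T-t,x)=E^{\bar u_1,\bar u_2}[g^i(X^{t,x}_{\tau_R})]$ combined with \eqref{estx} to get $|V_i^R(t,x)|\le C(1+|x|^{\beta})$ --- is exactly the device the paper uses in \eqref{estimvn}--\eqref{estimvn2} of the preceding theorem. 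You in fact spell out more detail than the paper does, including the interior $W^{1,2}_q$ estimates and the handling of the multivalued graph in the limits.
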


%%%%%%%%%%%%%%%%%%%%%%%%%%%%%%
\section{Connection with the NZSDG}
In the following result we make the connection between the solutions
of the PDEs \eqref{sistemagenerale1}-\eqref{ricontorno} (resp. \eqref{HHD}; resp. \eqref{weakversion}),  and the NZSDG. This result provides information on the features of the Nash point of the game which could be useful in several contexts, such as its numerics and simulation, etc. Note that in bounded domains it is already known (see e.g. \cite{BeFr1,Frie1}).
\begin{theorem}\label{nashpoint}
For $i=1,2$ and $(t,x)\in [0,T]\times \re^N$, let us set
$w^i(t,x)=V^i(T-t,x)$ where $(V^1,V^2)$ is a solution of the system
\eqref{sistemagenerale1}-\eqref{ricontorno} (resp. \eqref{HH}, resp. \eqref{HHD}). Then the pair of
controls $$\ba {l}(\ol u^1,\ol u^2)=(\ol u^1(s),\ol u^2(s))_{s\leq T}:=\\\qquad \qquad (\ol u^1(s,X_s,\nabla_x w^1(s,X_s),
\nabla_x w^2(s,X_s), \ol u^2(s,X_s,\nabla_x w^1(s,X_s), w^2(s,X_s))_{s\leq T}\ea $$ is Nash equilibrium for the nonzero-sum
stochastic differential game. Moreover for any $t\leq T$, $w^i(t,X_t)=J_t^i(\ol u^1,\ol u^2)$, i.e., 
$(w^1(t,X_t),w^2(t,X_t))_{t\leq T}$ are the associated Nash conditional payoffs of the game.
\end{theorem}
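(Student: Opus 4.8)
The plan is to establish the Nash equilibrium property by a verification argument based on the Itô-Krylov formula, exploiting the regularity of the solution $(V^1,V^2)$ guaranteed by Theorems~\ref{existence}, \ref{existence3}, or \ref{existence4}. Since $w^i(t,x)=V^i(T-t,x)$, the functions $w^i$ solve the backward-in-time version of the HJBI system, with terminal condition $w^i(T,x)=g_i(x)$ and with the sign conventions matching the game's payoff functional $J^i_t$. The key observation is that $w^i$ belongs to $W^{1,2}_q(\Omega_T)$ for $q>N+2$ on every bounded subdomain, so the Itô-Krylov formula (as cited via \cite{krylov}, Theorem 2.10.1) applies to $w^i(s,X_s)$ even though $w^i$ is not classically $C^{1,2}$.

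\medskip\noindent First I would fix the candidate feedback controls $(\ol u^1,\ol u^2)$ as defined in the statement and apply Itô-Krylov to $w^i(s,X_s)$ under the probability $P^{\ol u^1,\ol u^2}$, under which $X$ solves the SDE with drift $f(s,X_s,\ol u^1(s),\ol u^2(s))$. Using that $w^i$ solves the PDE almost everywhere and absorbing the second-order and drift terms, the dynamics of $w^i(s,X_s)$ reduce to $-h_i(s,X_s,\ol u^1(s),\ol u^2(s))\,ds$ plus a stochastic integral; taking conditional expectation $E^{\ol u^1,\ol u^2}[\cdot\mid\cf_t]$ and using $w^i(T,X_T)=g_i(X_T)$ yields exactly $w^i(t,X_t)=J^i_t(\ol u^1,\ol u^2)$, which establishes the payoff identity. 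One must justify that the local martingale part is a true martingale, which follows from the boundedness (Cases~1,2) or the polynomial growth estimate \eqref{growth} together with the moment bound \eqref{estx} on $X$ (Case~3).

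\medskip\noindent Next I would verify the equilibrium inequalities. Fix player $\pi_1$ and let $u_1\in M_1$ be an arbitrary admissible control while $\pi_2$ keeps $\ol u^2$. Applying Itô-Krylov to $w^1(s,X_s)$ under $P^{u_1,\ol u^2}$ and using the defining maximality property \eqref{sistemagenerale2} of $\ol u^1$, namely that $\ol u^1(s,\cdot)$ maximizes $H_1(s,x,\nabla_x w^1,\cdot,\ol u^2)$, I would obtain that the drift term for $w^1(s,X_s)$ under the deviating control dominates $-h_1(s,X_s,u_{1s},\ol u^2(s))$ pointwise. This produces the inequality $w^1(t,X_t)\ge J^1_t(u_1,\ol u^2)$, and by symmetry the analogous bound for $\pi_2$; at $t=0$ these are precisely the Nash conditions. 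The same argument covers the discontinuous cases, where $\ol u^i$ is a selection from the Heaviside/argmax set and the inequality \eqref{gicintro} of (A0) guarantees the required pointwise comparison of Hamiltonians.

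\medskip\noindent \textbf{The main obstacle} is the rigorous application of the Itô-Krylov formula in the unbounded domain $R_T$ combined with controlling integrability, since $w^i$ is only a strong (not classical) solution and, in Case~3, has merely polynomial growth with coefficients of linear growth. The clean resolution is to first localize: run the argument on $B_{R,T}$ up to the exit time $\tau_R$, exactly as in \eqref{eq:vlocal}, where $w^i\in W^{1,2}_q$ and Itô-Krylov applies unambiguously, then let $R\to\infty$. The limiting step requires the uniform moment estimate \eqref{estx} and the growth bound \eqref{growth} to pass the boundary term $w^i(\tau_R,X_{\tau_R})$ and the running integral to the limit via dominated convergence, and to confirm that the stochastic integral remains a genuine martingale rather than a strict local one.
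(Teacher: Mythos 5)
Your proposal is correct, and its first half coincides with the paper's own argument: the paper also localizes on $B_{R,T}$ with the exit times $\tau_R$, derives the identity \eqref{eq:retrowi}, and lets $R\to\infty$ using $L^1$-convergence of the boundary term $w^i(\tau_R,X_{\tau_R})$ and of the running integral; the only difference is that instead of quoting the It\^o--Krylov formula wholesale, the paper re-derives it in situ, approximating $w^i$ by smooth functions $w^{i,n}$ (uniform convergence of $w^{i,n}$ and $\nabla_x w^{i,n}$, $L^q$-convergence of $\partial_t w^{i,n}$ and $D^2_{xx}w^{i,n}$) and controlling $\int {\cal L}w^{i,n}(s,X_s)\,ds$ via Aronson's two-sided Gaussian bounds on the density of $X_s$ --- in substance the same tool, which the paper itself cites from \cite{krylov} in the existence section. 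Where you genuinely diverge is in the Nash inequalities: you run a direct verification, computing the drift of $w^1(s,X_s)$ under $P^{u_1,\ol u_2}$ and using the argmax property \eqref{sistemagenerale2} to conclude that $w^1(s\wedge\tau_R,X_{s\wedge\tau_R})+\int h_1\,ds$ is a supermartingale, whence $w^1(t,X_t)\ge J^1_t(u_1,\ol u_2)$ after $R\to\infty$; the paper instead invokes the martingale representation theorem to write the deviation payoff $J^1_t(u_1,\ol u_2)$ as the solution of a BSDE with driver $H_1(s,X_s,\sigma^{-1}(s,X_s)^\top Z^{1,u_1}_s,u_{1s},\ol u_2(s))$ and compares it with $w^1$ through the GIC, again localizing with $\tau_R$. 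Your route is more elementary --- it never needs a semimartingale representation of $J^1(u_1,\ol u_2)$, only its definition as a conditional expectation --- while the paper's BSDE detour keeps the verification aligned with the probabilistic formulation \eqref{bsdeg} of the game described in the introduction; both hinge on the same pointwise Hamiltonian comparison and the same limit passage. Two small points for a final write-up: the drift of $w^1(s,X_s)$ under the deviation is \emph{dominated by} $-h_1(s,X_s,u_{1s},\ol u_2(s))$ (you wrote ``dominates''), since $H_1(\cdot,\nabla_x w^1,u_{1s},\ol u_2(s))-H_1(\cdot,\nabla_x w^1,\ol u_1(s),\ol u_2(s))\le 0$ --- the inequality you then assert is the correct one, so this is cosmetic; and in Case 3 the moment bound \eqref{estx} must be available under the deviated measure $P^{u_1,\ol u_2}$ as well, which holds because the drift has linear growth in $x$ uniformly in the controls.
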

\begin{proof}As the functions $V^i$, $i=1,2$, belong to $H^{1+\alpha}(\ol \Omega_{T})\cap
W^{1,2}_{q}(\Omega_{T})$ for any bounded subdomain $\Omega_T \subset R_T, \ \alpha\in (0,1),\ q>N+2$ then also $w^i$, $i=1,2$ have the same regularity. Next let $R$ be fixed. Therefore one can find two sequences $(w^{1,n})_{n\ge 0}$ and $(w^{2,n})_{n\geq 0}$ of ${\cal C}^{1,2}(\esp)$ such that: for $i=1,2$,

(i) $(w^{i,n})_n$ (resp. $(\nabla_x w^{i,n})_n)$ converges uniformly to $w^i$ (resp. $\nabla_x w^i$) in $B_{R,T}$;

(ii) $(\partial_t w^{i,n})_n$ (resp.
$(D^2_{xx}w^{i,n})_n$) converges in $L^q(B_{R,T},dt\otimes dx)$ to $\partial_t w^{i}$ (resp.
$D^2_{xx}w^{i}$).
\ms

\noindent Now recall the sequence of stopping times $(\tau_R)_{R\geq 1}$ given in (\ref{eq:vlocal}) which is non-decreasing and converges to $T$ as $R\rightarrow \infty$. Next making use of It\^o's formula we obtain:
\begin{equation}\label{approxvi}\begin{array}{l}
w^{i,n}(\tau_R,X_{\tau_R})=w^{i,n}(t\wedge \tau_R,X_{t\wedge \tau_R})+\int_\tt^{\t_R}\nabla_x w^{i,n}(s,X_s)dX_s+
\int_\tt^{\t_R}{\cal L}w^{i,n}(s,X_s)ds
\end{array}
\end{equation}
where
$$
{\cal L}w^{i,n}(t,x):=\partial_tw^{i,n}(t,x)+\frac{1}{2}\sum_{i,j}a_{ij}(t,x)D^2_{x_ix_j}w^{i,n}(t,x).
$$
But under condition (\ref{uniellip}), for any $s> 0$, the random
variable $X_s$ has a density $p(0,x;s,y)dy$ which satisfies (see e.g.
\cite{Ar}, p.891 for more details)
$$\frac{1}{C_1\sqrt{2\pi s}}\exp\{-{\frac{1}{2C_1s}}\|x-y\|^2\}\le
p(0,x;s,y)\leq \frac{1}{C_2\sqrt{2\pi s}}\exp\{-{\frac{1}{2C_2s}}\|x-y\|^2\}
$$
for some constants $C_1$ and $C_2$ positive. Therefore for any $t\leq T$, we have
$$\ba {l} E[|
\int_\tt^{\t_R}{\cal L}w^{i,n}(s,X_s)ds-\\\qquad\qquad\qquad  
\int_\tt^{\t_R}
H_{i}(s,X_s,\nabla_{x}w^{i}(s,X_s), (\ol u_{1},\ol u_2)(s,X_s, \nabla_{x}w^{1}(s,X_s),\nabla_{x}w^{2}(s,X_s)))ds|]\rw_n 0.\ea $$
Going back now to (\ref{approxvi}) and take the limit w.r.t  $n$ to obtain: $\forall t\leq T$,
$$\begin{array}{l}
w^{i}(\tau_R,X_{\tau_R})=w^{i}(t\wedge \tau_R,X_{t\wedge \tau_R})+\int_\tt^{\t_R}\nabla_x w^{i}(s,X_s)dX_s\\\\\qquad\qquad  -
\int_\tt^{\t_R}H_{i}(s,X_s,\nabla_{x}w^{i}(s,X_s), (\ol u_{1},\ol u_2)(s,X_s, \nabla_{x}w^{1}(s,X_s),\nabla_{x}w^{2}(s,X_s)))ds
\end{array}
$$
which implies that
\begin{equation}\label{eq:retrowi}\begin{array}{l}
w^{i}(t\wedge \tau_R,X_{t\wedge \tau_R})=E^{\ol u_1,\ol u_2}[w^{i}(\tau_R,X_{\tau_R})\\\qq+\int_{t\wedge \tau_R}^{\t_R}h_{i}(s,X_s, (\ol u_{1},\ol u_2)(s,X_s, \nabla_{x}w^{1}(s,X_s),\nabla_{x}w^{2}(s,X_s)))ds|\cf_{t\wedge \tau_R}
].
\end{array}
\end{equation}But 
$$
E^{\ol u_1,\ol u_2}[w^{i}(\tau_R,X_{\tau_R})|\cf_{t\wedge \tau_R}]=E^{\ol u_1,\ol u_2}[w^{i}(\tau_R,X_{\tau_R})-g^i(X_T)|\cf_{t\wedge \tau_R}]+
E^{\ol u_1,\ol u_2}[g^i(X_T)|\cf_{t\wedge \tau_R}]
$$
and $(E^{\ol u_1,\ol u_2}[w^{i}(\tau_R,X_{\tau_R})-g^i(X_T)|\cf_{t\wedge \tau_R}])_{R\geq 1}$ (resp. $(E^{\ol u_1,\ol u_2}[g^i(X_T)|\cf_{t\wedge \tau_R}])_{R\geq 1}$) converges to $0$ (resp. $E^{\ol u_1,\ol u_2}[g^i(X_T)|\cf_t]$) in $L^1(dP^{\ol u_1,\ol u_2})$ as $R\rightarrow \infty$. 
Then
$$
\lim_{R\rightarrow \infty}E^{\ol u_1,\ol u_2}[w^{i}(\tau_R,X_{\tau_R})|\cf_{t\wedge \tau_R}]\overset{L^1(P^{\ol u_1,\ol u_2})}=E^{\ol u_1,\ol u_2}[g^i(X_T)|\cf_{t}] .
$$
In the same way we deal with the second term of \eqref{eq:retrowi} to obtain
$$\ba{l}\lim_{R\rightarrow \infty}E^{\ol u_1,\ol u_2}[\int_{t\wedge \tau_R}^{\t_R}h_{i}(s,X_s,\ol u^1(s),\ol u^2(s))ds|\cf_{t\wedge \tau_R}]\overset{L^1(P^{\ol u_1,\ol u_2})}=E^{\ol u_1,\ol u_2}[\int_{t}^{T}h_{i}(s,X_s,\ol u^1(s),\ol u^2(s))ds|\cf_{t}].\ea$$Therefore take the limit w.r.t $R$ in both hand-sides of \eqref{eq:retrowi} to deduce that $$w^i(t,X_t)=J_t^i(\ol u^1,\ol u^2),\,\,\forall t\leq T;\,\,i=1,2.$$
Next let us fix $i=1$ and let $u_1:=(u_{1t})_{t\leq T}$ be an admissible control for the first player. For $t\leq T$ let us set
$$
J^{1}_t(u_1,\ol u_2)=E^{u,\ol u_2}[\int_t^Th_1(s,X_s, u_{1s},\ol
u_2(s))ds+g^1(X_T)|\cf_t].
$$
Therefore by the representation property one can find a progressively measurable process $Z^{1,u_1}=(Z^{1,u_1}_t)_{t\leq T}$ such that for any $t\leq T$,
$$
J^{1}_t(u_1,\ol u_2)=g^1(X_T)+\int_t^T
H_{1}(s,X_s,\sigma^{-1}(t,X_t)^\top Z^{1,u_1}_s, u_{1s},\ol u_{2}(s))ds-\int_t^TZ^{1,u_1}_sdB_s.
$$
Taking into account of (A0), (GIC condition) it implies that $$
w^{1}(t\wedge \tau_R,X_{t\wedge \tau_R})-J^{1}_{t\wedge \tau_R}(u_1,\ol u_2)\geq E^{u_1,\ol u_2}[
w^{1}(\tau_R,X_{\tau_R})-J^{1}_{\tau_R}(u_1,\ol u_2)|\cf_{t\wedge \tau_R}].
$$
Take now the limit as $R\rw \infty$ to obtain $w^{1}(t,X_{t})-J^{1}_t(u_1,\ol u_2)\geq 0$ since $w^1(T,X_T)=g^1(X_T)=J^{1}_T(u_1,\ol u_2)$ and the probabilities $P^{u_1,\ol u_2}$ and $P$ are equivalent. Thus for any $t\leq T$, 
$J_t^1(\ol u_1,\ol u_2)\geq J_t^1(u_1,\ol u_2)$ for any admissible control $u_1$ of the first player $\pi_1$. In the same way one can show that
$J_t^2(\ol u_1,\ol u_2)\geq J_t^2(\ol u_1,u_2)$ for any admissible control $u_2$ of the second player $\pi_2$. Take now $t=0$ in the previous inequalities to deduce that 
$J_0^1(\ol u_1,\ol u_2)\geq J_0^1(u_1,\ol u_2)$ and $J_0^2(\ol u_1,\ol u_2)\geq J_0^2(\ol u_1,u_2)$ which means that $(\ol u_1,\ol u_2)$ is Nash equilibrium point for the NZSDG. Finally note that $w^i(t,X_t)=J_t^i(\ol u_1,\ol u_2)$, $i=1,2$ and then $(w^1(t,X_t),w^2(t,X_t))_{t\leq T}$ are the Nash conditional payoffs of the NZSDG.
\end{proof}
\noindent{\sc Acknowledgments }:\\
The second author is member of the INDAM-Gnampa and is partially
supported by the research project of the University of Padova
"Mean-Field Games and Nonlinear PDEs" and by the Fondazione CaRiPaRo
Project "Nonlinear Partial Differential Equations: Asymptotic
Problems and Mean-Field Games". Part of this research was done while
the second author visited Le Mans University.

\ed